\documentclass[10pt,twoside]{article}
\usepackage{mathrsfs}
\usepackage{amsmath}
\usepackage{amssymb}
\usepackage{fancyhdr}
\usepackage{latexsym}
\usepackage{bbding}
\usepackage{mathrsfs}
\usepackage{wasysym}
\usepackage{multicol,graphics}

\setcounter{MaxMatrixCols}{10}
\newtheorem{theorem}{Theorem}[section]
\newtheorem{lemma}[theorem]{Lemma}

\newtheorem{definition}[theorem]{Definition}

\newtheorem{proposition}[theorem]{Proposition}
\numberwithin{equation}{section}
\newenvironment{proof}[1][Proof]{\noindent\textbf{#1.} }{\hfill $\Box$}
\allowdisplaybreaks

 \makeatletter\setlength{\textwidth}{15.0cm}
  \setlength{\oddsidemargin}{1.0cm}
\setlength{\evensidemargin}{1.0cm} \setlength{\textheight}{21.0cm}

\begin{document}
\title{{The Optimal Temporal Decay Estimates for the Fractional Power Dissipative Equation in Negative Besov Spaces}}
\author{Jihong Zhao\\
[0.2cm] {\small College of Science, Northwest A\&F
University, Yangling, Shaanxi 712100,  P. R. China}\\
[0.2cm] {\small E-mail: jihzhao@163.com, zhaojih@nwsuaf.edu.cn}}
\date{}
\maketitle

\begin{abstract}
In this paper, we first generalize a new energy approach, developed by Y. Guo and Y. Wang \cite{GW12},   in the framework of homogeneous Besov spaces for proving the optimal temporal decay rates of  solutions to  the fractional power dissipative equation, then we apply this approach to the supercritical and critical quasi-geostrophic equation and the critical Keller-Segel system. We show that the negative Besov norm of solutions is preserved along time evolution,  and obtain the optimal temporal decay rates of the spatial derivatives of solutions by the Fourier splitting approach and the interpolation techniques.

\textbf{Keywords}: Dissipative equation, quasi-geostrophic equation; fractional Keller-Segel system; temporal decay; Besov space

\textbf{2010 AMS Subject Classification}: 35B40, 35Q86,  35Q92, 35R11
\end{abstract}

\section{Introduction}
Consider the following Cauchy problem of the fractional power dissipative equation:
\begin{equation}\label{eq1.1}
\begin{cases}
\partial_{t}u+\Lambda^{\alpha} u=0\quad &\mbox{in}\ \
  \mathbb{R}^n\times(0,\infty),\\
u(x,0)=u_{0}(x)\quad &\mbox{in}\ \
  \mathbb{R}^n,\\
\end{cases}
\end{equation}
where $\alpha\in(0,2]$,  the fractional power of Laplacian $\Lambda^{\alpha}=(-\Delta)^{\frac{\alpha}{2}}$  is defined by
\begin{equation*}
  \Lambda^{\alpha}f(x):
    =2^{\alpha}\pi^{-\frac{n}{2}}\frac{\Gamma(\frac{n+\alpha}{2})}{\Gamma(-\frac{\alpha}{2})}P.V.\int_{\mathbb{R}^{n}}\frac{f(x-y)}{|y|^{n+\alpha}}dy.
\end{equation*}
We first  prove the following result:
\begin{theorem}\label{th1.1} Let $N\ge 0$ be an integer,  $s\ge0$ be a real number and $2\le p<\infty$.
If $u_{0}\in \dot{B}^{N}_{p,1}(\mathbb{R}^{n})\cap\dot{B}^{-s}_{p,\infty}(\mathbb{R}^{n})$, then there exists a constant $C_{0}$ such that
\begin{equation}\label{eq1.2}
  \|u(t)\|_{\dot{B}^{-s}_{p,\infty}}\le C_{0}.
\end{equation}
Moreover, for any
real number $\ell\in(-s,N]$, we have
\begin{equation}\label{eq1.3}
  \|u(t)\|_{\dot{B}^{\ell}_{p,1}}\le C_{0}(1+t)^{-\frac{\ell+s}{\alpha}}.
\end{equation}
\end{theorem}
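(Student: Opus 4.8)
The plan is to avoid the explicit representation $u(t)=e^{-t\Lambda^{\alpha}}u_{0}$ and instead run a purely energy-type argument at the level of the Littlewood--Paley blocks, so that the same scheme transfers to the nonlinear applications later in the paper. Writing $\Delta_{j}$ for the dyadic frequency projection, I would first apply $\Delta_{j}$ to \eqref{eq1.1}, test the resulting equation against $|\Delta_{j}u|^{p-2}\Delta_{j}u$ and integrate over $\mathbb{R}^{n}$. The crucial input is the generalized Bernstein (positivity) inequality for frequency-localized functions,
\[
\int_{\mathbb{R}^{n}}\Lambda^{\alpha}\Delta_{j}u\,|\Delta_{j}u|^{p-2}\Delta_{j}u\,dx\ \ge\ c_{0}2^{\alpha j}\|\Delta_{j}u\|_{L^{p}}^{p},\qquad 2\le p<\infty,
\]
which yields the block-wise differential inequality $\frac{d}{dt}\|\Delta_{j}u\|_{L^{p}}+c_{0}2^{\alpha j}\|\Delta_{j}u\|_{L^{p}}\le 0$, hence $\|\Delta_{j}u(t)\|_{L^{p}}\le e^{-c_{0}2^{\alpha j}t}\|\Delta_{j}u_{0}\|_{L^{p}}$. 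Multiplying by $2^{j\ell}$ and summing over $j$ then gives the energy inequality $\frac{d}{dt}\|u\|_{\dot{B}^{\ell}_{p,1}}+c_{0}\|u\|_{\dot{B}^{\ell+\alpha}_{p,1}}\le 0$.

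The bound \eqref{eq1.2} is immediate from the block estimate: since each $\|\Delta_{j}u(t)\|_{L^{p}}$ is non-increasing in time, $2^{-js}\|\Delta_{j}u(t)\|_{L^{p}}\le 2^{-js}\|\Delta_{j}u_{0}\|_{L^{p}}$, and taking the supremum over $j$ preserves the negative Besov norm, $\|u(t)\|_{\dot{B}^{-s}_{p,\infty}}\le\|u_{0}\|_{\dot{B}^{-s}_{p,\infty}}=:C_{0}$.

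For the decay \eqref{eq1.3} I would use the Fourier splitting (Schonbek) method. Fixing a cutoff level $J=J(t)$ and splitting the Besov sum at $2^{J}$, on low frequencies I use the preserved negative norm $\|\Delta_{j}u\|_{L^{p}}\le 2^{js}C_{0}$, so that $\sum_{j\le J}2^{j\ell}\|\Delta_{j}u\|_{L^{p}}\lesssim C_{0}2^{J(\ell+s)}$ (the geometric sum converging because $\ell+s>0$), while on high frequencies the factor $2^{-\alpha j}\le 2^{-\alpha J}$ gives $\sum_{j>J}2^{j\ell}\|\Delta_{j}u\|_{L^{p}}\le 2^{-\alpha J}\|u\|_{\dot{B}^{\ell+\alpha}_{p,1}}$. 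Combining these yields
\[
\|u\|_{\dot{B}^{\ell+\alpha}_{p,1}}\ \ge\ 2^{\alpha J}\bigl(\|u\|_{\dot{B}^{\ell}_{p,1}}-CC_{0}2^{J(\ell+s)}\bigr),
\]
which I substitute into the energy inequality. Choosing the radius so that $2^{\alpha J}=\beta/(1+t)$ with $\beta$ large and writing $E(t)=\|u(t)\|_{\dot{B}^{\ell}_{p,1}}$, this becomes
\[
\frac{d}{dt}E+\frac{c_{0}\beta}{1+t}E\ \le\ c_{0}CC_{0}\beta^{1+(\ell+s)/\alpha}(1+t)^{-1-(\ell+s)/\alpha}.
\]
Multiplying by the integrating factor $(1+t)^{c_{0}\beta}$ and integrating, with $\beta$ chosen so that $c_{0}\beta>(\ell+s)/\alpha$, gives $E(t)\le E(0)(1+t)^{-c_{0}\beta}+C(1+t)^{-(\ell+s)/\alpha}\le C(1+t)^{-(\ell+s)/\alpha}$, which is \eqref{eq1.3}; here $E(0)=\|u_{0}\|_{\dot{B}^{\ell}_{p,1}}$ is finite for $\ell\in(-s,N]$ by interpolation between $\dot{B}^{-s}_{p,\infty}$ and $\dot{B}^{N}_{p,1}$.

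I expect the main obstacle to be the generalized Bernstein positivity inequality for $2\le p<\infty$: for $p=2$ it reduces to Plancherel, but for $p>2$ it requires a genuinely nonlinear argument (a C\'ordoba--C\'ordoba-type pointwise estimate combined with a frequency-localization/maximum-principle argument), and it is precisely this ingredient that forces the hypothesis $2\le p<\infty$. A secondary technical point is justifying the differential inequality and the finiteness of $\|u\|_{\dot{B}^{\ell+\alpha}_{p,1}}$ for $t>0$; for the present model this follows from the smoothing effect of $e^{-t\Lambda^{\alpha}}$, and when $\|u\|_{\dot{B}^{\ell+\alpha}_{p,1}}=\infty$ the lower bound above holds trivially, so the splitting argument is unaffected.
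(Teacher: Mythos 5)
Your proposal is correct, and its first half — the block energy identity, the generalized Bernstein positivity inequality for $2\le p<\infty$ (which you rightly flag as the only deep input and the source of the restriction $p\ge2$), and the preservation of the $\dot{B}^{-s}_{p,\infty}$ norm via the monotone block bound $\|\Delta_{j}u(t)\|_{L^{p}}\le e^{-c_{0}2^{\alpha j}t}\|\Delta_{j}u_{0}\|_{L^{p}}$ — coincides with the paper's proof. You diverge at the decay step: the paper does not perform an explicit Fourier splitting, but instead invokes the interpolation inequality $\|u\|_{\dot{B}^{\ell}_{p,1}}\le C\|u\|_{\dot{B}^{-s}_{p,\infty}}^{\frac{\alpha}{\ell+s+\alpha}}\|u\|_{\dot{B}^{\ell+\alpha}_{p,1}}^{\frac{\ell+s}{\ell+s+\alpha}}$ (Lemma \ref{le2.4} vi), uses the conserved negative norm to convert it into the lower bound $\|u\|_{\dot{B}^{\ell+\alpha}_{p,1}}\ge C\|u\|_{\dot{B}^{\ell}_{p,1}}^{1+\frac{\alpha}{\ell+s}}$, and then solves the resulting Bernoulli-type ODE $\frac{d}{dt}E+C_{0}E^{1+\frac{\alpha}{\ell+s}}\le0$ in closed form. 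The two mechanisms are in fact dual: optimizing your splitting level $J$ at fixed time in the bound $\|u\|_{\dot{B}^{\ell}_{p,1}}\lesssim C_{0}2^{J(\ell+s)}+2^{-\alpha J}\|u\|_{\dot{B}^{\ell+\alpha}_{p,1}}$ reproduces exactly the paper's interpolation inequality, so your time-dependent choice $2^{\alpha J}=\beta/(1+t)$ is a near-optimal but not optimal selection that still yields the sharp rate. What the paper's route buys is brevity and robustness: there is no auxiliary large parameter (your argument needs $c_{0}\beta>(\ell+s)/\alpha$, plus the harmless integer-part adjustment of $J$ and the integrating-factor computation), and the superlinear-ODE structure transfers verbatim to the nonlinear problems in Sections 4--5, where the same trick is applied to the weighted quantity $e^{-K\mathcal{Y}(t)}\mathcal{E}(t)$. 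What your route buys is transparency about where the rate comes from — it makes the Schonbek low-high frequency balance explicit, and it isolates cleanly why $\ell+s>0$ is needed (convergence of the low-frequency geometric sum), the same condition that keeps the paper's interpolation exponents in $(0,1)$. Your closing remarks on rigor (finiteness of $E(0)$ by interpolation between $\dot{B}^{-s}_{p,\infty}$ and $\dot{B}^{N}_{p,1}$, and the trivial validity of the splitting bound when $\|u\|_{\dot{B}^{\ell+\alpha}_{p,1}}=\infty$) are sound and in fact slightly more careful than the paper, which passes over these points silently.
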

\begin{proof}
 Applying the dyadic partition operator $\Delta_{j}$ to the equation \eqref{eq1.1}, we see that
\begin{equation*}
  \partial_{t}\Delta_{j}u+\Lambda^{\alpha}\Delta_{j}u=0,
\end{equation*}
which taking the standard $L^{2}$  inner product with $|\Delta_{j}u|^{p-2}\Delta_{j}u$ gives us to
\begin{equation}\label{eq1.4}
  \frac{1}{p}\frac{d}{dt}\|\Delta_{j}u\|_{L^{p}}^{p}+\int_{\mathbb{R}^{n}}\Lambda^{\alpha}\Delta_{j}u|\Delta_{j}u|^{p-2}\Delta_{j}udx=0.
\end{equation}
Thanks to the Bernstein's inequality (cf. \cite{CMZ07}, \cite{W05}), there exists a constant $\kappa$ such that
\begin{equation*}
  \int_{\mathbb{R}^{n}}\Lambda^{\alpha}\Delta_{j}u|\Delta_{j}u|^{p-2}\Delta_{j}udx\geq \kappa2^{\alpha j}\|\Delta_{j}u\|_{L^{p}}^{p}.
\end{equation*}
Thus, we obtain from \eqref{eq1.4} that
\begin{equation}\label{eq1.5}
   \frac{d}{dt}\|\Delta_{j}u\|_{L^{p}}+\kappa2^{\alpha j}\|\Delta_{j}u\|_{L^{p}}\leq 0.
\end{equation}
Multiplying the above inequality \eqref{eq1.5}  by $2^{j\ell}$, then taking $l^{\infty}$ norm to the resultant yields that
\begin{equation}\label{eq1.6}
  \frac{d}{dt}\|u\|_{\dot{B}^{\ell}_{p,\infty}}+\kappa\|u\|_{\dot{B}^{\ell+\alpha}_{p,\infty}}\leq 0,
\end{equation}
Integrating the above \eqref{eq1.6} in time, we obtain
\begin{equation}\label{eq1.7}
  \|u\|_{\dot{B}^{\ell}_{p,\infty}}\leq \|u_{0}\|_{\dot{B}^{\ell}_{p,\infty}}.
\end{equation}
This implies that inequality \eqref{eq1.2} holds by choosing $\ell=-s$. On the other hand,
 multiplying \eqref{eq1.5}  by $2^{j\ell}$, and taking  $l^{1}$ norm to the resultant yields that
\begin{equation}\label{eq1.8}
  \frac{d}{dt}\|u\|_{\dot{B}^{\ell}_{p,1}}+\kappa\|u\|_{\dot{B}^{\ell+\alpha}_{p,1}}\leq 0.
\end{equation}
Now for $-s<\ell\leq N$, we use the interpolation relation, see Lemma \ref{le2.4} below,  to get
\begin{equation*}
  \|u\|_{\dot{B}^{\ell}_{p,1}}\leq \|u\|_{\dot{B}^{-s}_{p,\infty}}^{\frac{\alpha}{\ell+s+\alpha}}\|u\|_{\dot{B}^{\ell+\alpha}_{p,1}}^{\frac{\ell+s}{\ell+s+\alpha}},
\end{equation*}
which combining \eqref{eq1.7} with $\ell=-s$ implies that
\begin{equation}\label{eq1.9}
  \|u\|_{\dot{B}^{\ell+\alpha}_{p,1}}\geq \|u_{0}\|_{\dot{B}^{-s}_{p,\infty}}^{-\frac{\alpha}{\ell+s}}\|u\|_{\dot{B}^{\ell}_{p,1}}^{1+\frac{\alpha}{\ell+s}}.
\end{equation}
Plugging \eqref{eq1.9} into \eqref{eq1.8}, we deduce that there exists a constant $C_{0}$ such that
\begin{equation*}
  \frac{d}{dt}\|u\|_{\dot{B}^{\ell}_{p,1}}+C_{0}\|u\|_{\dot{B}^{\ell}_{p,1}}^{1+\frac{\alpha}{\ell+s}}\leq 0.
\end{equation*}
Solving this inequality implies that
\begin{equation}\label{eq1.10}
  \|u\|_{\dot{B}^{\ell}_{p,1}}\leq \big(\|u_{0}\|_{\dot{B}^{\ell}_{p,1}}^{-\frac{\alpha}{\ell+s}}+\frac{C_{0}\alpha t}{\ell+s}\big)^{-\frac{\ell+s}{\alpha}}\leq C_{0}(1+t)^{-\frac{\ell+s}{\alpha}}.
\end{equation}
We complete the proof of Theorem \ref{th1.1}.
\end{proof}

\noindent\textbf{Remark 1.1} Theorem \ref{th1.1} is essentially inspired by Y. Guo and Y. Wang \cite{GW12}, where they developed a new energy
approach in the framework of Sobolev spaces for proving the optimal time decay rates of the solutions to the dissipative equation ($\alpha=2$).

\noindent\textbf{Remark 1.2} Theorem \ref{th1.1} generalizes the corresponding result in \cite{ZL15}, which we relax the regularity of the initial data in a wider range of Besov spaces. Moreover, the restrictive condition $p\ge2$ is due to Bernstein's inequality, which we don't know whether or not it is true for $1\le p<2$.

The structure of this paper is arranged as follows. In Section 2, we first recall some basic facts on Littlewood-Paley theory, then collect some important analytic tools used in this paper. In Section 3, we state our main results related to the optimal temporal decay estimates of the solutions to the supercritical and critical quasi-geostrophic equations and the critical Keller-Segel system, respectively. Section 4 is devoted to the proof of Theorem \ref{th3.1}, while Section 5 is devoted to the proof of Theorem \ref{th3.2}.

\section{Preliminaries}

\subsection{Notations}

Throughout this paper, we shall use the following notations.
\begin{itemize}
\item Let $T_1$, $T_2$ be two operators, we denote the commutator between
$T_1$ and $T_2$ by
$[T_1, T_2]:=T_1T_2-T_2T_1$.

\item  $f\lesssim g$ means that there
is a generic constant $C$ (always independent of $x, t$) such that $f\le Cg$.  $f\approx g$ means that $f\lesssim g$
and $g\lesssim f$.

\item We use $(f|g)$ to denote the standard
$L^{2}(\mathbb{R}^{n})$ inner product of two functions $f$ and $g$.

\item For a quasi-Banach space $X$ and for any $0<T\leq\infty$, we use standard notation $L^{p}(0,T; X)$ or $L^{p}_{T}(X)$  for the quasi-Banach space of Bochner measurable functions
$f$ from $(0, T)$ to $X$ endowed with the norm
\begin{equation*}
\|f\|_{L^{p}_{T}(X)}:=
\begin{cases}
  (\int_{0}^{T}\|f(\cdot,t)\|_{X}^{p}dt)^{\frac{1}{p}}\ \ \ &\text{for}\ \ \ 1\leq p<\infty,\\
  \sup_{0\leq t\leq T}\|f(\cdot,t)\|_{X}\ \ \ &\text{for}\ \ \ p=\infty.
\end{cases}
\end{equation*}
In particular, if $T=\infty$, we use $\|f\|_{L^{p}_{t}(X)}$ instead of $\|f\|_{L^{p}_{\infty}(X)}$.

\item $(d_{j})_{j\in\mathbb{Z}}$ will be a generic element of
$l^{1}(\mathbb{Z})$ so that $d_{j}\ge0$ and
$\sum_{j\in\mathbb{Z}}d_{j}=1$.

\end{itemize}

\subsection{Littlewood-Paley theory and Besov spaces}

We start with the Fourier transform. Let
$\mathcal{S}(\mathbb{R}^{n})$ be the Schwartz class of rapidly
decreasing function, and $\mathcal{S}'(\mathbb{R}^{n})$ of temperate distributions be the dual set of $\mathcal{S}(\mathbb{R}^{n})$. Given  $f\in\mathcal{S}(\mathbb{R}^{n})$, the Fourier transform $\mathcal{F}(f)$ (or $\hat{f}$) is defined by
$$
  \mathcal{F}(f)(\xi)=\int_{\mathbb{R}^{n}}e^{-2\pi i x\cdot\xi}f(x)dx.
$$
More generally, the Fourier transform of  a tempered distribution $f\in\mathcal{S}'(\mathbb{R}^{n})$ is defined by the dual argument in the standard way.

We now introduce a dyadic decomposition in $\mathbb{R}^{n}$.
Let $\varphi\in\mathcal{S}(\mathbb{R}^{n})$ be a  smooth radial function valued in $[0,1]$ such that  $\varphi$ is supported in the shell $\mathcal{C}=\{\xi\in\mathbb{R}^{n},\ \frac{3}{4}\leq
|\xi|\leq\frac{8}{3}\}$, and
\begin{align*}
  \sum_{j\in\mathbb{Z}}\varphi(2^{-j}\xi)=1, \ \ \ \forall\xi\in\mathbb{R}^{n}\backslash\{0\}.
\end{align*}
Let $h=\mathcal{F}^{-1}\varphi$. Then  for any $f\in\mathcal{S}'(\mathbb{R}^{n})$, we define the dyadic blocks
$\Delta_{j}$ and $S_{j}$ as follows:
\begin{align}\label{eq2.1}
 \Delta_{j}f: =2^{nj}\int_{\mathbb{R}^{n}}h(2^{j}y)f(x-y)dy \ \ \ \text{and}\ \ \
  S_{j}f:=\sum_{k\leq j-1}\Delta_{k}f.
\end{align}
By telescoping the series, we have the following homogeneous Littlewood-Paley decomposition:
\begin{equation*}
  f=\sum_{j\in\mathbb{Z}}\Delta_{j}f \ \ \text{for}\ \
  f\in\mathcal{S}'(\mathbb{R}^{n})/\mathcal{P}(\mathbb{R}^{n}),
\end{equation*}
where $\mathcal{P}(\mathbb{R}^{n})$ is the set of polynomials (see \cite{BCD11}).
We remark here that the Littlewood-Paley decomposition satisfies the property of almost orthogonality, that is to say, for any $f, g\in\mathcal{S}'(\mathbb{R}^{n})/\mathcal{P}(\mathbb{R}^{n})$, the following properties hold:
\begin{align}\label{eq2.2}
  \Delta_{i}\Delta_{j}f\equiv0\ \ \ \text{if}\ \ \ |i-j|\geq 2\ \ \ \text{and}\ \ \
  \Delta_{i}(S_{j-1}f\Delta_{j}g)\equiv0 \ \ \ \text{if}\ \ \ |i-j|\geq 5.
\end{align}
For more details, see \cite{BCD11} and \cite{T83}.

Using the above decomposition, the  stationary/time dependent homogeneous Besov spaces can be defined as follows:

\begin{definition}\label{de2.1}
For $s\in \mathbb{R}$, $1\leq p,r\leq\infty$ and $f\in\mathcal{S}'(\mathbb{R}^{n})$, we set
\begin{equation*}
  \|f\|_{\dot{B}^{s}_{p,r}}:= \begin{cases} \left(\sum_{j\in\mathbb{Z}}2^{jsr}\|\Delta_{j}f\|_{L^{p}}^{r}\right)^{\frac{1}{r}}
  \ \ &\text{for}\ \ 1\leq r<\infty,\\
  \sup_{j\in\mathbb{Z}}2^{js}\|\Delta_{j}f\|_{L^{p}}\ \
  &\text{for}\ \
  r=\infty.
 \end{cases}
\end{equation*}
Then the homogeneous Besov
space $\dot{B}^{s}_{p,r}(\mathbb{R}^{n})$ is defined by
\begin{itemize}
\item For $s<\frac{n}{p}$ (or $s=\frac{n}{p}$ if $r=1$), we define
\begin{equation*}
  \dot{B}^{s}_{p,r}(\mathbb{R}^{n}):=\Big\{f\in \mathcal{S}'(\mathbb{R}^{n}):\ \
  \|f\|_{\dot{B}^{s}_{p,r}}<\infty\Big\}.
\end{equation*}
\item If $k\in\mathbb{N}$ and $\frac{n}{p}+k\leq s<\frac{n}{p}+k+1$ (or $s=\frac{n}{p}+k+1$ if $r=1$), then $\dot{B}^{s}_{p,r}(\mathbb{R}^{n})$
is defined as the subset of distributions $f\in\mathcal{S}'(\mathbb{R}^{n})$ such that $\partial^{\beta}f\in\mathcal{S}'(\mathbb{R}^{n})$
whenever $|\beta|=k$.
\end{itemize}
\end{definition}

\begin{definition}\label{de2.2}  For $0<T\leq\infty$, $s\leq \frac{n}{p}$ (resp. $s\in \mathbb{R}$) and
$1\leq p, r, \rho\leq\infty$. We define the mixed time-space $\mathcal{L}^{\rho}(0,T; \dot{B}^{s}_{p,r}(\mathbb{R}^{n}))$
as the completion of $\mathcal{C}([0,T]; \mathcal{S}(\mathbb{R}^{n}))$ by the norm
$$
  \|f\|_{\mathcal{L}^{\rho}_{T}(\dot{B}^{s}_{p,r})}:=\left(\sum_{j\in\mathbb{Z}}2^{jsr}\left(\int_{0}^{T}
  \|\Delta_{j}f(\cdot,t)\|_{L^{p}}^{\rho}dt\right)^{\frac{r}{\rho}}\right)^{\frac{1}{r}}<\infty
$$
with the usual change if $\rho=\infty$
or $r=\infty$.  For simplicity, we use $\|f\|_{\mathcal{L}^{\rho}_{t}(\dot{B}^{s}_{p,r})}$ instead of $\|f\|_{\mathcal{L}^{\rho}_{\infty}(\dot{B}^{s}_{p,r})}$.
\end{definition}

The following properties of Besov spaces are well-known:

 (1) If  $s<\frac{n}{p}$ or
$s=\frac{n}{p}$ and $r=1$, then
$(\dot{B}^{s}_{p,r}(\mathbb{R}^{n}),\|\cdot\|_{\dot{B}^{s}_{p,r}})$
is a Banach space which is continuously embedded in
$\mathcal{S}'(\mathbb{R}^{n})$.

(2) In the case that $p=r=2$, we get the homogeneous Sobolev space
$\dot{H}^{s}(\mathbb{R}^{n})\cong\dot{B}^{s}_{2,2}(\mathbb{R}^{n})$,
which is endowed with the equivalent norm
$\|f\|_{\dot{H}^{s}}=\|\Lambda^{s}f\|_{L^{2}}$  with $\Lambda=\sqrt{-\Delta}$.

(3) Let $s\in \mathbb{R}$, $1\leq p,r\leq\infty$, and
$u\in\mathcal{S}'(\mathbb{R}^{n})/\mathcal{P}(\mathbb{R}^{n})$. Then $u\in
\dot{B}^{s}_{p,r}(\mathbb{R}^{n})$ if and only if there exists
$\{d_{j,r}\}_{j\in\mathbb{Z}}$ such that $d_{j,r}\ge0$,
$\|d_{j,r}\|_{l^{r}}=1$ and
$$
  \|\Delta_{j}u\|_{L^{p}}\lesssim
  d_{j,r}2^{-js}\|u\|_{\dot{B}^{s}_{p,r}} \ \ \text{for all }\
  j\in\mathbb{Z}.
$$

(4)  According to the  Minkowski inequality,
it is readily to see that
\begin{equation*}
\begin{cases}
  \|f\|_{\mathcal{L}^{\rho}_{T}(\dot{B}^{s}_{p,r})}\leq\|f\|_{L^{\rho}_{T}(\dot{B}^{s}_{p,r})} \ \ \  \text{if}\ \ \  \rho\leq r,\\
  \|f\|_{L^{\rho}_{T}(\dot{B}^{s}_{p,r})}\leq \|f\|_{\mathcal{L}^{\rho}_{T}(\dot{B}^{s}_{p,r})} \ \ \ \text{if} \ \ \  r\leq \rho.
\end{cases}
\end{equation*}

\subsection{Useful analytic tools}

For the convenience of the reader, we present some crucial analytic tools as follows. The first one is an improved Bernstein inequalities, see, for example, \cite{BCD11, W05}.

\begin{lemma}\label{le2.3} {\em (\cite{BCD11},  \cite{W05})}
Let $\mathcal{B}$ be a ball, and $\mathcal{C}$  a ring in
$\mathbb{R}^{n}$. There exists a constant $C$ such that for any
positive real number $\lambda$, any nonnegative integer $k$ and any
couple of real numbers $(a,b)$ with $1\leq a\le b\leq \infty$, we
have
\begin{equation}\label{eq2.3}
   \operatorname{supp}\hat{f}\subset\lambda\mathcal{B}\ \ \Rightarrow\ \   \sup_{|\alpha|=k}\|\Lambda^{\alpha}f\|_{L^{b}}\leq
   C^{k+1}\lambda^{k+n(\frac{1}{a}-\frac{1}{b})}\|f\|_{L^{a}},
\end{equation}
\begin{equation}\label{eq2.4}
   \operatorname{supp}\hat{f}\subset\lambda\mathcal{C} \ \ \Rightarrow\ \   C^{-1-k}\lambda^{k}\|f\|_{L^{a}}\leq
   \sup_{|\alpha|=k}\|\Lambda^{\alpha}f\|_{L^{a}}\leq  C^{1+k}\lambda^{k}\|f\|_{L^{a}}.
\end{equation}
\end{lemma}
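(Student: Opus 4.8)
The plan is to derive both families of estimates from a single mechanism: a distribution whose Fourier transform is supported in a dilated ball $\lambda\mathcal{B}$ or a dilated ring $\lambda\mathcal{C}$ is reproduced by convolution against a fixed Schwartz kernel rescaled to frequency $\lambda$, and every bound then follows by combining Young's convolution inequality with the scaling law of the $L^{c}$ norm of that kernel. Throughout I read the symbol with $|\alpha|=k$ as the differential monomial $D^{\alpha}=\partial_{x_1}^{\alpha_1}\cdots\partial_{x_n}^{\alpha_n}$ of order $k$; the same multiplier argument absorbs genuine fractional powers with only cosmetic changes, which is what matters for the application in \eqref{eq1.4}.

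For \eqref{eq2.3} I would fix $\chi\in\mathcal{S}(\mathbb{R}^{n})$ with $\hat{\chi}\equiv1$ on a neighbourhood of $\mathcal{B}$ and $\operatorname{supp}\hat{\chi}$ compact. Since $\operatorname{supp}\hat{f}\subset\lambda\mathcal{B}$ forces $\hat{f}(\xi)=\hat{\chi}(\xi/\lambda)\hat{f}(\xi)$, we get $D^{\alpha}f=g_{\lambda,\alpha}\ast f$ with $g_{\lambda,\alpha}=\mathcal{F}^{-1}[(2\pi i\xi)^{\alpha}\hat{\chi}(\cdot/\lambda)]$. The key computation is the scaling identity
\[
  g_{\lambda,\alpha}(x)=\lambda^{k+n}G_{\alpha}(\lambda x),\qquad G_{\alpha}:=\mathcal{F}^{-1}[(2\pi i\xi)^{\alpha}\hat{\chi}],
\]
which yields $\|g_{\lambda,\alpha}\|_{L^{c}}=\lambda^{k+n(1-1/c)}\|G_{\alpha}\|_{L^{c}}$. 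Choosing $c$ through $1+\tfrac1b=\tfrac1c+\tfrac1a$, so that $1-\tfrac1c=\tfrac1a-\tfrac1b$, and applying Young's inequality gives $\|D^{\alpha}f\|_{L^{b}}\le\|G_{\alpha}\|_{L^{c}}\,\lambda^{k+n(1/a-1/b)}\|f\|_{L^{a}}$; taking the supremum over $|\alpha|=k$ is then \eqref{eq2.3}, provided $\sup_{|\alpha|=k}\|G_{\alpha}\|_{L^{c}}\le C^{k+1}$.

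For \eqref{eq2.4} the upper bound is exactly the case $a=b$ of the previous step, with $\chi$ replaced by a cutoff $\theta$ that is $\equiv1$ on $\mathcal{C}$ and supported in a slightly larger annulus. The lower bound is the genuinely new ingredient, and the step I expect to be the main obstacle, because it requires inverting differentiation on the frequency annulus. For $k=1$ I would use that $|\xi|$ stays bounded away from $0$ on $\operatorname{supp}\theta(\cdot/\lambda)$ to write the exact identity
\[
  \hat{f}(\xi)=\sum_{j=1}^{n}\frac{-2\pi i\,\xi_{j}\,\theta(\xi/\lambda)}{4\pi^{2}|\xi|^{2}}\,\widehat{\partial_{j}f}(\xi),
\]
whose multipliers $m_{\lambda,j}$ are smooth and compactly supported and obey the scaling $m_{\lambda,j}(\xi)=\lambda^{-1}\tilde{m}_{j}(\xi/\lambda)$, hence $\|\mathcal{F}^{-1}m_{\lambda,j}\|_{L^{1}}=\lambda^{-1}\|\mathcal{F}^{-1}\tilde{m}_{j}\|_{L^{1}}$. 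Young's inequality then delivers $\lambda\|f\|_{L^{a}}\le C\sup_{|\alpha|=1}\|\partial^{\alpha}f\|_{L^{a}}$. Since each $\partial^{\beta}f$ again has Fourier transform supported in $\lambda\mathcal{C}$, iterating this one-step bound $k$ times upgrades it to $\lambda^{k}\|f\|_{L^{a}}\le C^{k}\sup_{|\alpha|=k}\|\partial^{\alpha}f\|_{L^{a}}$, i.e.\ the left half of \eqref{eq2.4}.

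The only delicate book-keeping, and where I would spend the care, is the geometric dependence $C^{k+1}$ of the constants on the order $k$. In \eqref{eq2.3} this reduces to bounding $\|G_{\alpha}\|_{L^{c}}$ by controlling finitely many derivatives of the compactly supported symbol $(2\pi i\xi)^{\alpha}\hat{\chi}$ and observing that differentiating the polynomial prefactor costs at most a geometric factor in $k$; in \eqref{eq2.4} the factor $C^{k}$ is produced automatically by the $k$-fold iteration. Everything else is the routine interplay of the support hypothesis, the rescaling $\xi\mapsto\xi/\lambda$, and Young's convolution inequality.
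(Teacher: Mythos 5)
The paper offers no proof of this lemma at all: it is quoted (up to notation) from the cited references \cite{BCD11} and \cite{W05}, and your argument is precisely the standard proof found there --- reproduce $f$ by convolution with a rescaled cutoff kernel and apply Young's inequality together with the $L^{c}$-scaling of that kernel, then obtain the lower bound on the annulus by inverting the gradient through the smooth, compactly supported multipliers $-2\pi i\xi_{j}\theta(\xi/\lambda)/(4\pi^{2}|\xi|^{2})$ and iterating the resulting one-derivative estimate $k$ times. Your reading of $\Lambda^{\alpha}$ with $|\alpha|=k$ as the multi-index derivative $D^{\alpha}$ is also the correct interpretation (it is exactly what the cited lemma asserts), so the proposal is correct and coincides with the proof the paper implicitly relies on.
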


Secondly, we present  some basic properties of Besov spaces (see \cite{BCD11}, \cite{T83}).

\begin{lemma}\label{le2.4}  {\em (\cite{BCD11}, \cite{T83})}
The following properties hold:
\begin{itemize}
\item [i)] Density: The set $C_{0}^{\infty}(\mathbb{R}^{n})$ is dense in $\dot{B}^{s}_{p,r}(\mathbb{R}^{n})$ if $|s|<\frac{n}{p}$ and $1\leq p,r<\infty$ or $s=\frac{n}{p}$ and $r=1$.

\item [ii)] Derivatives: There exists a universal constant $C$ such that
\begin{equation*}
    C^{-1}\|u\|_{\dot{B}^{s}_{p,r}}\leq \|\nabla u\|_{\dot{B}^{s-1}_{p,r}}\leq C\|u\|_{\dot{B}^{s}_{p,r}}.
\end{equation*}

\item [iii)] Fractional derivatives: Let $\Lambda=\sqrt{-\Delta}$ and $\sigma\in\mathbb{R}$. Then the operator $\Lambda^{\sigma}$ is an isomorphism from $\dot{B}^{s}_{p,r}(\mathbb{R}^{n})$
to $\dot{B}^{s-\sigma}_{p,r}(\mathbb{R}^{n})$.

\item [iv)] Algebraic properties: For $s>0$, $\dot{B}^{s}_{p,r}(\mathbb{R}^{n})\cap L^{\infty}(\mathbb{R}^{n})$ is an algebra.  Moreover,  $\dot{B}^{\frac{n}{p}}_{p,1}(\mathbb{R}^{n})\hookrightarrow \dot{B}^{0}_{\infty,1}(\mathbb{R}^{n})\hookrightarrow L^{\infty}(\mathbb{R}^{n})$, and for any $f,g\in\dot{B}^{s}_{p,r}(\mathbb{R}^{n})\cap L^{\infty}(\mathbb{R}^{n})$, we have
$$
  \|fg\|_{\dot{B}^{s}_{p,r}}\leq  \|f\|_{\dot{B}^{s}_{p,r}}\|g\|_{L^{\infty}}+ \|g\|_{\dot{B}^{s}_{p,r}}\|f\|_{L^{\infty}}.
$$

\item [v)] Imbedding: For $1\leq p_{1}\leq p_{2}\leq \infty$ and $1\leq r_{1}\leq r_{2}\leq \infty$, we have the continuous imbedding  $\dot{B}^{s}_{p_{1},r_{1}}(\mathbb{R}^{n})\hookrightarrow \dot{B}^{s-n(\frac{1}{p_{1}}-\frac{1}{p_{2}})}_{p_{2},r_{2}}(\mathbb{R}^{n})$.

\item [vi)] Interpolation:  For  $s_{1},s_{2}\in \mathbb{R}$ such that $s_{1}<s_{2}$ and $\vartheta\in (0,1)$,  there exists a constant $C$
such that
\begin{align*}
    &\|u\|_{\dot{B}^{s_{1}\vartheta+s_{2}(1-\vartheta)}_{p,r}}\leq C\|u\|_{\dot{B}^{s_{1}}_{p,r}}^{\vartheta}\|u\|_{\dot{B}^{s_{2}}_{p,r}}^{1-\vartheta},\\
   &\|u\|_{\dot{B}^{s_{1}\vartheta+s_{2}(1-\vartheta)}_{p,1}}\leq
   \frac{C}{s_{2}-s_{1}}\left(\frac{1}{\vartheta}+\frac{1}{1-\vartheta}\right)\|u\|_{\dot{B}^{s_{1}}_{p,\infty}}^{\vartheta}
   \|u\|_{\dot{B}^{s_{2}}_{p,\infty}}^{1-\vartheta}.
\end{align*}
\end{itemize}
\end{lemma}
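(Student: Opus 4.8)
The plan is to reduce every one of the six items to the frequency-localized definition of $\dot B^s_{p,r}$ combined with the Bernstein inequalities of Lemma \ref{le2.3}. The organizing idea is that each block $\Delta_j u$ has Fourier support in the annulus $2^j\mathcal C$, so the classical operators $\nabla$, $\Lambda^\sigma$, and the change of Lebesgue exponent all act, up to a fixed constant, as multiplication by an explicit power of $2^j$; once this is established blockwise, each statement becomes an assertion about the weighted sequence $(2^{js}\|\Delta_j u\|_{L^p})_{j\in\mathbb Z}$ in $\ell^r$.

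Items ii), iii) and v) I would obtain directly at the level of a single block. For ii), since $\nabla$ commutes with $\Delta_j$, the two-sided estimate \eqref{eq2.4} with $k=1$ gives $C^{-1}2^{j}\|\Delta_j u\|_{L^p}\le\|\nabla\Delta_j u\|_{L^p}\le C2^{j}\|\Delta_j u\|_{L^p}$; multiplying by $2^{j(s-1)}$ and taking the $\ell^r$ norm yields the claim. For iii) the symbol of $\Lambda^\sigma$ restricted to $2^j\mathcal C$ is $2^{j\sigma}$ times a fixed smooth function on the fixed annulus $\mathcal C$, so a rescaled-kernel (or Mikhlin-multiplier) argument gives $\|\Lambda^\sigma\Delta_j u\|_{L^p}\approx 2^{j\sigma}\|\Delta_j u\|_{L^p}$ and hence the isomorphism. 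For v), the first Bernstein inequality \eqref{eq2.3} with $k=0$ supplies $\|\Delta_j u\|_{L^{p_2}}\lesssim 2^{jn(\frac1{p_1}-\frac1{p_2})}\|\Delta_j u\|_{L^{p_1}}$, which accounts exactly for the shift of regularity index, and the passage from $r_1$ to $r_2$ is the elementary embedding $\ell^{r_1}\hookrightarrow\ell^{r_2}$. The chain $\dot B^{n/p}_{p,1}\hookrightarrow\dot B^0_{\infty,1}\hookrightarrow L^\infty$ in iv) then follows from v) (with $p_2=\infty$, $r_1=r_2=1$) together with $\|u\|_{L^\infty}\le\sum_j\|\Delta_j u\|_{L^\infty}=\|u\|_{\dot B^0_{\infty,1}}$.

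The two items that carry the real content are the product estimate in iv) and the second, sharp-constant interpolation inequality in vi). For iv) I would invoke Bony's decomposition $fg=T_fg+T_gf+R(f,g)$ with $T_fg=\sum_j S_{j-1}f\,\Delta_j g$; using the spectral localization properties \eqref{eq2.2}, the bound $\|S_{j-1}f\|_{L^\infty}\lesssim\|f\|_{L^\infty}$, and Bernstein on the remainder, the paraproducts are controlled by $\|f\|_{L^\infty}\|g\|_{\dot B^s_{p,r}}$ and $\|g\|_{L^\infty}\|f\|_{\dot B^s_{p,r}}$, while the condition $s>0$ is exactly what makes the remainder series $\ell^r$-summable. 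For the first inequality in vi) one only needs H\"older's inequality in the index $j$ (log-convexity of the Besov norms). The second is the delicate one: writing $\theta=s_1\vartheta+s_2(1-\vartheta)$ so that $s_1<\theta<s_2$, I would split the defining sum $\sum_j 2^{j\theta}\|\Delta_j u\|_{L^p}$ at a free integer $j_0$, estimate the low-frequency part $j\le j_0$ by $\|u\|_{\dot B^{s_1}_{p,\infty}}\sum_{j\le j_0}2^{j(\theta-s_1)}$ and the high-frequency part $j>j_0$ by $\|u\|_{\dot B^{s_2}_{p,\infty}}\sum_{j>j_0}2^{j(\theta-s_2)}$, each a convergent geometric series, and then choose $2^{j_0}\approx(\|u\|_{\dot B^{s_2}_{p,\infty}}/\|u\|_{\dot B^{s_1}_{p,\infty}})^{1/(s_2-s_1)}$ to balance the two contributions, which produces the product $\|u\|_{\dot B^{s_1}_{p,\infty}}^{\vartheta}\|u\|_{\dot B^{s_2}_{p,\infty}}^{1-\vartheta}$.

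I expect the main obstacle to be precisely the careful bookkeeping in this last optimization, since the target constant $\frac{C}{s_2-s_1}(\frac1\vartheta+\frac1{1-\vartheta})$ must be extracted from the two geometric sums rather than merely proving boundedness: the ratios are $2^{-(\theta-s_1)}=2^{-(s_2-s_1)(1-\vartheta)}$ and $2^{-(s_2-\theta)}=2^{-(s_2-s_1)\vartheta}$, so the denominators $1-2^{-(s_2-s_1)(1-\vartheta)}$ and $1-2^{-(s_2-s_1)\vartheta}$ are comparable to $(s_2-s_1)(1-\vartheta)$ and $(s_2-s_1)\vartheta$ when those exponents are small, which is exactly where the factor $\frac1{s_2-s_1}(\frac1\vartheta+\frac1{1-\vartheta})$ originates. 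This sharp-constant form of vi) is the version actually used to close the decay estimate in Theorem \ref{th1.1}. Finally, for the density statement i) I would approximate $u$ first by a frequency truncation $S_{j_2}u-S_{j_1}u$, which converges to $u$ in $\dot B^s_{p,r}$ as $j_1\to-\infty$ and $j_2\to+\infty$ precisely because $r<\infty$ controls the $\ell^r$ tails and $|s|<n/p$ keeps the low-frequency part under control, and then by a spatial cutoff and mollification to land in $C_0^\infty(\mathbb R^n)$.
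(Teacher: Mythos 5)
The paper contains no proof of Lemma \ref{le2.4} to compare against: the lemma is quoted as known background from \cite{BCD11} and \cite{T83}. Your proposal is, in substance, exactly the proof given in those references, so your route coincides with the source the paper leans on: blockwise Bernstein estimates \eqref{eq2.3}--\eqref{eq2.4} for items ii), iii), v); the chain $\dot B^{n/p}_{p,1}\hookrightarrow \dot B^{0}_{\infty,1}\hookrightarrow L^{\infty}$ together with Bony's decomposition, the bound $\|S_{j-1}f\|_{L^{\infty}}\lesssim\|f\|_{L^{\infty}}$, and summability of the remainder from $s>0$ for item iv); H\"older's inequality in $j$ for the first interpolation inequality; and the low/high frequency splitting at an optimized $j_{0}$ with $2^{j_{0}(s_{2}-s_{1})}\approx\|u\|_{\dot B^{s_{2}}_{p,\infty}}/\|u\|_{\dot B^{s_{1}}_{p,\infty}}$ for the second (this is Proposition 2.22 of \cite{BCD11}). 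Your identification of iv) and the second half of vi) as the items carrying real content, and of the latter as the inequality that drives Theorem \ref{th1.1}, is accurate.

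One point deserves care, and it is the one you flag yourself: the constant in vi). Your computation produces the prefactor $\frac{1}{1-2^{-(s_{2}-s_{1})(1-\vartheta)}}+\frac{1}{1-2^{-(s_{2}-s_{1})\vartheta}}$, and this is comparable to $\frac{1}{s_{2}-s_{1}}\bigl(\frac{1}{\vartheta}+\frac{1}{1-\vartheta}\bigr)$ only in the regime where $(s_{2}-s_{1})\vartheta$ and $(s_{2}-s_{1})(1-\vartheta)$ are $O(1)$; when both are large, the prefactor stays bounded below by $2$ while the displayed constant tends to $0$. Read literally with a universal $C$, the stated inequality is in fact false for large $s_{2}-s_{1}$: test it on $u$ whose Fourier transform is supported in the set where $\varphi\equiv1$, so that $\Delta_{j}u=0$ for $j\neq0$ and all three norms coincide. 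So the honest output of your (correct) argument is the constant $C\bigl(1+\frac{1}{s_{2}-s_{1}}\bigl(\frac{1}{\vartheta}+\frac{1}{1-\vartheta}\bigr)\bigr)$; the sharper-looking form is an imprecision inherited from the statement itself (it appears in the same form in the cited reference) and is harmless here, since every application in the paper tolerates a constant depending on the fixed indices. The same remark applies to the product law in iv): your Bony argument yields it --- correctly --- only with a multiplicative constant $C$, not with constant $1$ as displayed.
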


Finally we recall the following Bony's paradifferential decomposition
 (see \cite{B81}). The
paraproduct between $f$ and $g$ is defined by
\begin{equation*}
  T_{f}g:=\sum_{j\in\mathbb{Z}}S_{j-1}f\Delta_{j}g.
\end{equation*}
Thus we have the formal decomposition
\begin{equation*}
  fg=T_{f}g+T_{g}f+R(f,g),
\end{equation*}
where
\begin{equation*}
  R(f,g):=\sum_{j\in\mathbb{Z}}\Delta_{j}f\widetilde{\Delta_{j}}g \ \
  \text{and}\ \
  \widetilde{\Delta_{j}}:=\Delta_{j-1}+\Delta_{j}+\Delta_{j+1}.
\end{equation*}

\section{Main results}

\subsection{Surface quasi-geostrophic equation}

Consider the Cauchy problem of the dissipative surface quasi-geostrophic
equation
\begin{equation}\label{eq3.1}
\begin{cases}
  \partial_{t}\theta+\mathbf{u}\cdot\nabla\theta+\mu\Lambda^{\alpha}\theta=0\quad &\mbox{in}\ \
  \mathbb{R}^2\times(0,\infty),\\
  \theta(x,0)=\theta_{0}(x)\quad &\mbox{in}\ \
  \mathbb{R}^2,\\
\end{cases}
\end{equation}
where $\alpha\in(0, 2]$ and $\mu\geq0$ are parameters,  $\theta$ is an unknown scalar function
representing the potential temperature,
$\mathbf{u}$ is the fluid
velocity field determined by
$$
 \mathbf{u}=(u^{1},u^{2})=(-\mathcal{R}_{2}\theta,\mathcal{R}_{1}\theta),
$$
where $\mathcal{R}_{j}$ ($j=1,2$) are 2D Riesz
transforms whose symbols are given by
$\frac{i\xi_{j}}{|\xi_{j}|}$. Since the concrete value of the constant $\mu$ plays no role in our discussion,
for simplicity, we assume that $\mu=1$ throughout this paper.

The inviscid surface quasi-geostrophic equation \eqref{eq3.1} ($\mu=0$) was first introduced by Constantin, Majda and Tabak \cite{CMT94} to model frontogenesis in meteorology, a formation of sharp fronts between masses of hot
and cold air, then it becomes
an important model in geophysical fluid dynamics used in meteorology
and oceanography, see Pedlosky \cite{P87}. In last two decades, the global regularity issue to  the equation
\eqref{eq3.1} has attracted enormous attention, and  many
remarkable results have been obtained. Generally speaking, the study of the equation
\eqref{eq3.1} is divided into three cases: the subcritical case
($1<\alpha\leq2$), the critical case ($\alpha=1$), and the supercritical
case ($0<\alpha<1$). For the subcritical case $1<\alpha\leq2$, the problem
is more or less resolved: Constantin and Wu \cite{CW99} established
global regularity of weak solutions to the equation \eqref{eq3.1}
with smooth initial data, see also Resnick \cite{R95}. For the
critical case $\alpha=1$, the problem was first considered by Constantin, C\'{o}rdoba and Wu
\cite{CCW01}, where the unique global solution with small initial data was
proved, subsequently it was successfully addressed by the following two mathematical groups:
Kiselev, Nazarov and Volberg \cite{KNV07} proved global
well-posedness of the equation \eqref{eq3.1} with periodic $C^{\infty}$
data by using a certain non-local maximum principle for a suitable
chosen modulus of continuity; Caffarelli and Vasseur \cite{CV10}
obtained a global regular weak solution for equation \eqref{eq3.1}
with merely $L^{2}$ initial data by using the modified De Georgi
interation.

As far as the author is concerned,  the global regularity issue for the supercritical case $\alpha<1$ remains open.  We mention that Constantin and Wu \cite{CW08, CW09} proved that if the solution of  the equation \eqref{eq3.1} is
in the H\"{o}lder spapce $C^{\delta}$ with $\delta>1-2\alpha$, then the solution is actually smooth.  This result was subsequently extended by Dong and
Pavlovic \cite{DP09} to cover the case  $\delta=1-2\alpha$.  We refer the reader to see \cite{D11} and \cite{S10} for some eventual regularity results of the
equation \eqref{eq3.1} with supercritical dissipation.

Note that the surface quasi-geostrophic equation \eqref{eq3.1} has a scaling. Indeed,  it is easy to see that if the pair $(\theta, \mathbf{u})$ solves the  equation \eqref{eq3.1} with initial data $\theta_{0}$, then the pair $(\theta_{\lambda}, \mathbf{u}_{\lambda})$ with
\begin{equation*}
 \theta_{\lambda}(x,t):=\lambda^{\alpha-1}\theta(\lambda x,
  \lambda^{\alpha}t), \ \ \ \mathbf{u}_{\lambda}(x,t):=\lambda^{\alpha-1}\mathbf{u}(\lambda x,
  \lambda^{\alpha}t),
\end{equation*}
is also a solution to the equation \eqref{eq3.1}  with initial data  $\theta_{0\lambda}(x):=\lambda^{\alpha-1} \theta_{0}(\lambda x)$. In particular, the norm of $\theta_{0}\in\dot{B}^{1+\frac{2}{p}-\alpha}_{p,1}(\mathbb{R}^{2})$ ($1\leq p\leq\infty$) is scaling invariant under the above change of scale.
Cheng, Miao and Zhang \cite{CMZ07} and  Hmidi and  Keraani \cite{HK07}, respectively,  proved global well-posedness of the equation \eqref{eq3.1} both in
critical and supercritical dissipation with small initial data in the critical Besov space $\dot{B}^{1+\frac{2}{p}-\alpha}_{p,1}(\mathbb{R}^2)$. The limit case $p=\infty$ was completely tackled by  Abidi and Hmidi \cite{AH08} and Wang and Zhang \cite{WZ11a}, respectively.  For more interesting results related to this topic, we refer the reader to see \cite{BB15, CL03, CZ07, CC04, LY14, SVZ13}.

Motivated by the optimal time decay rates of the solutions to the fractional power dissipation equation \eqref{eq1.1} in the framework of homogeneous Besov spaces, we
aim at using this approach to the dissipative supercritical and critical surface quasi-geostrophic equation \eqref{eq3.1}.  The
main result is as follows:

\begin{theorem}\label{th3.1}
Let $\alpha\in(0,1]$  and $p\in[2, \infty)$.
Suppose that $\theta_{0}\in
\dot{B}^{1+\frac{2}{p}-\alpha}_{p,1}(\mathbb{R}^{2})$. Then  there exists a positive constant $\varepsilon$  such that  for any
$ \|\theta_{0}\|_{\dot{B}^{1+\frac{2}{p}-\alpha}_{p,1}}<\varepsilon$,
the equation \eqref{eq3.1} has a unique global solution $\theta$, which belongs to
\begin{equation*}
  \theta\in C([0,\infty),\dot{B}^{1+\frac{2}{p}-\alpha}_{p,1}(\mathbb{R}^{2}))\cap\mathcal{L}^{\infty}(0, \infty;
  \dot{B}^{1+\frac{2}{p}-\alpha}_{p,1}(\mathbb{R}^{2}))\cap  L^{1}(0, \infty;
  \dot{B}^{1+\frac{2}{p}}_{p,1}(\mathbb{R}^{2})).
\end{equation*}
If we assume further that $\theta_{0}\in
\dot{B}^{-s}_{r,\infty}(\mathbb{R}^{2})$ with  $2\le r\le p$,  $-\frac{2}{p}<s<1+\frac{2}{p}$,
then  there exists a constant $C_{0}$ such that for all $t\geq 0$,
\begin{equation}\label{eq3.2}
     \|\theta(t)\|_{\dot{B}^{-s}_{r,\infty}}\leq C_{0}.
\end{equation}
Moreover,  for any $\ell\in[-s-2(\frac{1}{r}-\frac{1}{p}), 1+\frac{2}{p}-\alpha]$, we have
\begin{equation}\label{eq3.3}
     \|\theta(t)\|_{\dot{B}^{\ell}_{r,1}}\leq C_{0}(1+t)^{-(\frac{\ell+s}{\alpha})-\frac{2}{\alpha}(\frac{1}{r}-\frac{1}{p})}.
\end{equation}
\end{theorem}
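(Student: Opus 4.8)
The plan is to run the scheme of Theorem \ref{th1.1} for the linear semigroup $e^{-t\Lambda^{\alpha}}$, treating the transport term $\mathbf{u}\cdot\nabla\theta$ as a perturbation that the small critical norm renders harmless. First I would dispose of the existence part, which is standard: since $\|\theta_{0}\|_{\dot{B}^{1+\frac{2}{p}-\alpha}_{p,1}}$ is small and $\mathbf{u}=(-\mathcal{R}_{2}\theta,\mathcal{R}_{1}\theta)$ obeys $\|\mathbf{u}\|_{\dot{B}^{\sigma}_{q,m}}\approx\|\theta\|_{\dot{B}^{\sigma}_{q,m}}$ (Riesz transforms being isomorphisms by Lemma \ref{le2.4}(iii)), a contraction built on the smoothing of $e^{-t\Lambda^{\alpha}}$ together with the product law of Lemma \ref{le2.4}(iv) and Bony's decomposition produces the unique global solution in $\mathcal{L}^{\infty}_{t}(\dot{B}^{1+\frac{2}{p}-\alpha}_{p,1})\cap L^{1}_{t}(\dot{B}^{1+\frac{2}{p}}_{p,1})$ with norm controlled by $\varepsilon$.

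For the key estimates I would localize the equation and, using $\operatorname{div}\mathbf{u}=0$, split $\Delta_{j}(\mathbf{u}\cdot\nabla\theta)=\mathbf{u}\cdot\nabla\Delta_{j}\theta+[\Delta_{j},\mathbf{u}\cdot\nabla]\theta$. Testing against $|\Delta_{j}\theta|^{p-2}\Delta_{j}\theta$ as in \eqref{eq1.4} annihilates the first piece by integration by parts, and Bernstein's inequality (Lemma \ref{le2.3}) bounds the dissipation from below, giving $\frac{d}{dt}\|\Delta_{j}\theta\|_{L^{r}}+\kappa 2^{\alpha j}\|\Delta_{j}\theta\|_{L^{r}}\le\|[\Delta_{j},\mathbf{u}\cdot\nabla]\theta\|_{L^{r}}$. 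For \eqref{eq3.2} I would multiply by $2^{-js}$, take the supremum in $j$, integrate the Duhamel formula, and control the commutator through Bony's calculus by $\int_{0}^{t}\|\mathbf{u}\cdot\nabla\theta\|_{\dot{B}^{-s}_{r,\infty}}\,d\tau\lesssim\sup_{[0,t]}\|\theta\|_{\dot{B}^{-s}_{r,\infty}}\int_{0}^{\infty}\|\theta\|_{\dot{B}^{1+\frac{2}{p}}_{p,1}}\,d\tau$; since the last integral is finite and small, a continuity/Gr\"onwall argument closes the uniform bound $\|\theta(t)\|_{\dot{B}^{-s}_{r,\infty}}\le C_{0}$. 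The constraints $2\le r\le p$ and $-\frac{2}{p}<s<1+\frac{2}{p}$ are exactly what make these mixed-index paraproduct and remainder estimates converge (one factor is placed in the negative norm $\dot{B}^{-s}_{r,\infty}$, the other in the high norm $\dot{B}^{1+\frac{2}{p}}_{p,1}$, and $\frac1r=\frac1p+(\frac1r-\frac1p)$ governs the H\"older bookkeeping).

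For the decay \eqref{eq3.3} I would instead weight by $2^{j\ell}$ and take the $\ell^{1}$ sum, turning the commutator into $\|\mathbf{u}\cdot\nabla\theta\|_{\dot{B}^{\ell}_{\cdot,1}}$; the scaling-consistent product estimate $\|\mathbf{u}\cdot\nabla\theta\|_{\dot{B}^{\ell}_{\cdot,1}}\lesssim\|\theta\|_{\dot{B}^{1+\frac{2}{p}-\alpha}_{p,1}}\|\theta\|_{\dot{B}^{\ell+\alpha}_{\cdot,1}}$ then lets the smallness $\|\theta\|_{\dot{B}^{1+\frac{2}{p}-\alpha}_{p,1}}<\varepsilon$ absorb the nonlinearity into the dissipative term, leaving the clean inequality $\frac{d}{dt}\|\theta\|_{\dot{B}^{\ell}_{\cdot,1}}+\frac{\kappa}{2}\|\theta\|_{\dot{B}^{\ell+\alpha}_{\cdot,1}}\le0$. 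From here the argument is exactly that of Theorem \ref{th1.1}: interpolate (Lemma \ref{le2.4}(vi)) between the preserved negative norm and $\dot{B}^{\ell+\alpha}_{\cdot,1}$, then solve the Bernoulli-type inequality as in \eqref{eq1.10}. The enhancement $\frac{2}{\alpha}(\frac{1}{r}-\frac{1}{p})$ over the naive rate $-\frac{\ell+s}{\alpha}$ is supplied by the embedding $\dot{B}^{-s}_{r,\infty}\hookrightarrow\dot{B}^{-s-2(\frac{1}{r}-\frac{1}{p})}_{p,\infty}$ of Lemma \ref{le2.4}(v) with $n=2$, which upgrades the preserved low-frequency control to an effectively more negative regularity index $-s-2(\frac1r-\frac1p)$ entering the interpolation; the admissible range $\ell\in[-s-2(\frac{1}{r}-\frac{1}{p}),\,1+\frac{2}{p}-\alpha]$ is precisely the interval on which this sharpened interpolation coexists with the high-norm control furnished by the existence theory.

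The main obstacle is the second stage: proving that the negative norm is genuinely conserved, i.e. estimating the transport nonlinearity in $\dot{B}^{-s}_{r,\infty}$ uniformly in $j$ with two distinct Lebesgue exponents $r\le p$ and a low, possibly negative, regularity $-s$. This is where Bony's decomposition must be dissected carefully and where the full strength of the hypotheses on $(s,r,p)$ is used. Once \eqref{eq3.2} is secured, the smallness assumption reduces \eqref{eq3.3} to the perturbed Theorem \ref{th1.1} mechanism, and the remaining work is routine interpolation and the elementary ODE comparison.
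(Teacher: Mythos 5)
Your second paragraph (the uniform bound \eqref{eq3.2}) is essentially the paper's own argument: Proposition \ref{pro4.3} proves exactly the product estimate $\|\mathbf{u}\cdot\nabla\theta\|_{\dot{B}^{-s}_{r,\infty}}\lesssim\|\theta\|_{\dot{B}^{1+\frac{2}{p}}_{p,1}}\|\theta\|_{\dot{B}^{-s}_{r,\infty}}$ you invoke (valid precisely on the stated range of $s$; note that at this low regularity no commutator is needed, the full nonlinearity is estimated directly), and your Duhamel--Gr\"onwall closure is implemented in the paper as the integrating factor $e^{-K\mathcal{Y}(t)}$ with $\mathcal{Y}(t)=\int_{0}^{t}\|\theta\|_{\dot{B}^{1+2/p}_{p,1}}d\tau$, which is finite by the existence theory. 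Up to this point you are on the paper's track.

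The gap is in your third paragraph, i.e. in the decay step \eqref{eq3.3}. You propose to absorb the nonlinearity into the dissipation using smallness of the critical norm, via
\begin{equation*}
  \|\mathbf{u}\cdot\nabla\theta\|_{\dot{B}^{\ell}_{r,1}}\lesssim\|\theta\|_{\dot{B}^{1+\frac{2}{p}-\alpha}_{p,1}}\|\theta\|_{\dot{B}^{\ell+\alpha}_{r,1}}.
\end{equation*}
This estimate is scaling-consistent but \emph{false} for $\alpha<1$: in the paraproduct $T_{u^{i}}\partial_{i}\theta$ one must control $\|S_{j'-1}\mathbf{u}\|_{L^{\infty}}$, and since $1+\frac{2}{p}-\alpha>\frac{2}{p}$ the best one gets from the critical norm is $\|\Delta_{k}u\|_{L^{\infty}}\lesssim 2^{(\alpha-1)k}d_{k}\|\theta\|_{\dot{B}^{1+2/p-\alpha}_{p,1}}$ with $(d_{k})\in l^{1}$, and $\sum_{k\le j'-2}2^{(\alpha-1)k}d_{k}$ diverges because $2^{(\alpha-1)k}\to\infty$ as $k\to-\infty$ (positive-regularity homogeneous Besov norms do not control low frequencies in $L^{\infty}$). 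Only the dissipation-level norm $\|\theta\|_{\dot{B}^{1+2/p}_{p,1}}$ bounds $\|S_{j'-1}\nabla\mathbf{u}\|_{L^{\infty}}$ (after the commutator gains one derivative, as in Lemma \ref{le4.2}), and that quantity is not small pointwise in time --- only its time integral is finite --- so it cannot be absorbed. This is precisely why the paper never uses critical-norm absorption: it keeps the factor $\mathcal{Y}'(t)$, kills it with the weight $e^{-K\mathcal{Y}(t)}$ (Proposition \ref{pro4.1}), runs the Bernoulli ODE only for the critical quantity $\mathcal{E}(t)$ via the reverse interpolation \eqref{eq4.13}, and then reaches general $\ell$ by interpolating between the preserved negative norm and the decaying critical norm, never by repeating the ODE at each $\ell$. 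Your scheme would work at $\alpha=1$, but not on the full range $\alpha\in(0,1]$ claimed by the theorem. A secondary caveat: your mechanism for the enhanced rate --- feeding the embedded norm $\dot{B}^{-s-2(\frac{1}{r}-\frac{1}{p})}_{p,\infty}$ into an interpolation whose output is the $L^{r}$-based norm $\dot{B}^{\ell}_{r,1}$ --- is dimensionally inconsistent (a convex combination of $L^{p}$-based norms can only produce an $L^{p}$-based norm; scaling forces the unenhanced rate $\frac{\ell+s}{\alpha}$ for $\dot{B}^{\ell}_{r,1}$), so the enhancement $\frac{2}{\alpha}(\frac{1}{r}-\frac{1}{p})$ is really obtained for $\|\theta(t)\|_{\dot{B}^{\ell}_{p,1}}$; the paper's final interpolation step glosses over the same point.
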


\noindent\textbf{Remark 3.1} An important feature in Theorem \ref{th3.1} is that the negative Besov norm of the solution is preserved along the time evolution, see Proposition \ref{pro4.3} below. Moreover,  we do not need to impose on small condition to the $\dot{B}^{-s}_{r,\infty}$-norm of  initial data,  which  enhances the time decay rates of the solution with the factor $\frac{s}{\alpha}$.

\noindent\textbf{Remark 3.2}  The general $L^{r}$ temporal decay rates of the solution can be obtained by the imbedding theory, for instance,  for any $2\le r<\infty$,
\begin{align*}
  \|\theta(t)\|_{L^{r}}\leq C\|\theta(t)\|_{\dot{H}^{1-\frac{2}{r}}}\leq C\|\theta(t)\|_{\dot{B}^{1-\frac{2}{r}}_{2,1}}\leq C(1+t)^{-\frac{s}{\alpha}-\frac{2}{\alpha}(1-\frac{1}{r}-\frac{1}{p})}.
\end{align*}

\noindent\textbf{Remark 3.3} We are not intending to tackle with the subcritical case $1<\alpha\le 2$ because of similar result as that of  Theorem \ref{th3.1} still holds in this case, but the proof is more or less standard.

The proof of Theorem \ref{th3.1} will be given in Section 4.

\subsection{Fractional Keller-Segel system}

Consider the following Cauchy problem of nonlinear nonlocal evolution system generalizing
the well-known Keller-Segel model of chemotaxis:
\begin{equation}\label{eq3.4}
\begin{cases}
  \partial_{t}u+\nu\Lambda^{\alpha}u+\nabla\cdot(u\nabla \psi)=0\quad &\mbox{in}\ \
  \mathbb{R}^2\times(0,\infty),\\
   -\Delta \psi=u\quad &\mbox{in}\ \
  \mathbb{R}^2\times(0,\infty),\\
  u(x,0)=u_0(x) \ \  &\mbox{in}\ \ \mathbb{R}^2.
\end{cases}
\end{equation}
where $\alpha\in(0,2]$ and $\nu>0$ are parameters,  $u$ and $\psi$ are two unknown functions which stand for the cell density and the concentration of the chemical attractant, respectively. For the sake of simplicity, we assume that $\nu=1$.

Of course, when $\alpha=2$, the system \eqref{eq3.4} is  a famous  biological model of chemotaxis, which is formulated  by E.F. Keller and L.A. Segel \cite{KS70} to describe  the collective motion of cells
under chemotactic attraction, leading possibly to aggregation of cells. It is well-known that the system \eqref{eq3.4} admits  finite time blowup solutions for large enough initial data, we refer the reader to see \cite{B98, BKZ14, BCM07, BDP06, HV961, HV962, JL92, LR091, LR092, NSY97} and the references therein for a comprehensive review of this topic.  On the other hand, when $\alpha=2$,  the solvability of the system \eqref{eq3.4}  with small initial data in various
classes of functions and distributions has been relatively well-developed,  for instance,  the Lebesgue space $L^{1}(\mathbb{R}^{n})\cap L^{\frac{n}{2}}(\mathbb{R}^{n})$  by Corrias, Perthame and Zaag \cite{CPZ04}, the Sobolev space $L^{1}(\mathbb{R}^{n})\cap W^{2,2}(\mathbb{R}^{n})$ by Kozono and Sugiyama \cite{KS08},  the Hardy space $\mathcal{H}^{1}(\mathbb{R}^{2})$ by Ogawa and Shimizu \cite{OS08}, the Besov space $\dot{B}^{0}_{1,2}(\mathbb{R}^{2})$ by Ogawa and Shimizu \cite{OS10},  the Besov space $\dot{B}^{-2+\frac{n}{p}}_{p,\infty}(\mathbb{R}^{n})$ and Fourier-Herz space $\dot{\mathcal{B}}^{-2}_{2}(\mathbb{R}^{n})$ by Iwabuchi \cite{I11}, for more results, see \cite{L13}.

For general fractional diffusion case $1\le\alpha<2$, the system \eqref{eq3.4} was first studied by Escudero in \cite{E06},  where it was used to describe the spatiotemporal distribution of a population density of random walkers undergoing L\'{e}vy flights, and the author proved that the one-dimensional system \eqref{eq3.4} possesses global in time solutions not
only in the case of $\alpha=2$ but also in the case $1<\alpha<2$. Since the fractional Keller-Segel system \eqref{eq3.4} is also scaling invariant under the following change of scale:
\begin{equation*}
  u_{\lambda}(x,t):=\lambda^{\alpha}u(\lambda x, \lambda^{\alpha}t)\ \ \ \text{and}\ \ \   \psi_{\lambda}(x,t):=\lambda^{\alpha-2}\psi(\lambda x, \lambda^{\alpha}t),
\end{equation*}
 the global well-posedness with small initial data in different scaling invariant spaces (so-called critical spaces) has been considerably established, for example, the critical Lebesgue space $L^{\frac{n}{\alpha}}(\mathbb{R}^{n})$  with $1<\alpha<2$ by Biler and Karch \cite{BK10},  the critical  Besov spaces $\dot{B}^{-\alpha+\frac{2}{p}}_{p,q}(\mathbb{R}^{2})$ with $1<\alpha<2$ by Biler and Wu \cite{BW09} and   Zhai \cite{Z10}, the critical Fourier-Herz
space $\mathcal{\dot{B}}^{2-2\alpha}_{q}(\mathbb{R}^{n})$ with $1<\alpha\leq2$ by Wu and Zheng \cite{WZ11b}.  Recently, in one dimensional space, the authors in \cite{BG151}  showed that the solution to the critical Keller-Segel system \eqref{eq3.4} ($\alpha=1$) on $\mathbb{S}^1$ remains smooth for any initial data and any positive time, moreover, they studied the global existence of solutions to a one-dimensional critical Keller-Segel system with logistic term, see \cite{BG152}.

Note that in the supercritical case $0<\alpha<1$,  since the dissipative term $\Lambda^{\alpha}u$ is not strong enough to dominate the nonlinear nonlocal term $\nabla\cdot(u\nabla\psi)$, the well-posedness issue of the system \eqref{eq3.4} in dimensions two is still an open problem. On the other hand, in the critical case $\alpha=1$, the author in this paper has successfully proved the global well-posedness of the system \eqref{eq3.4} with small initial data $u_{0}\in \dot{B}^{-1+\frac{2}{p}}_{p,1}(\mathbb{R}^2)$ ($1\leq p\leq \infty$), see \cite{Z15} for more results.  Motivated by this result, applying the approach illustrated in Theorem \ref{th1.1}, the optimal time decay rates of the solutions to the critical Keller-Segel system \eqref{eq3.4} in the homogeneous Besov spaces can be proved. The main result is as follows:

\begin{theorem}\label{th3.2}
Let  $\alpha=1$ and  $p\in[2,\infty)$.
Suppose that $u_{0}\in
\dot{B}^{-1+\frac{2}{p}}_{p,1}(\mathbb{R}^{2})$. Then  there exists a positive constant $\varepsilon$  such that  for any
$ \|u_{0}\|_{\dot{B}^{-1+\frac{2}{p}}_{p,1}}<\varepsilon$,
the system \eqref{eq3.4} has a unique global solution $u$, which belongs to
\begin{equation*}
  u\in C([0,\infty),\dot{B}^{-1+\frac{2}{p}}_{p,1}(\mathbb{R}^{2}))\cap\mathcal{L}^{\infty}(0, \infty;
  \dot{B}^{-1+\frac{2}{p}}_{p,1}(\mathbb{R}^{2}))\cap  L^{1}(0, \infty;
  \dot{B}^{\frac{2}{p}}_{p,1}(\mathbb{R}^{2})).
\end{equation*}
If we assume further that $u_{0}\in
\dot{B}^{-s}_{r,\infty}(\mathbb{R}^{2})$ with  $2\le r\le p$,  $1-\frac{2}{p}<s<1+\frac{2}{p}$,
then  there exists a constant $C_{0}$ such that for all $t\geq 0$,
\begin{equation}\label{eq3.5}
     \|u(t)\|_{\dot{B}^{-s}_{r,\infty}}\leq C_{0}.
\end{equation}
Moreover,  for any $\ell\in[-s-2(\frac{1}{r}-\frac{1}{p}), -1+\frac{2}{p}]$,  we have
\begin{equation}\label{eq3.6}
     \|u(t)\|_{\dot{B}^{\ell}_{r,1}}\leq C_{0}(1+t)^{-(\ell+s)-2(\frac{1}{r}-\frac{1}{p})}.
\end{equation}
\end{theorem}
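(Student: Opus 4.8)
The plan is to follow the three–step scheme underlying Theorem \ref{th1.1}: first invoke the global small–data solution, then show that the extra negative regularity $\dot{B}^{-s}_{r,\infty}$ is propagated in time, and finally combine the resulting uniform bound with the dissipative energy inequality, an interpolation inequality and a Fourier–splitting/ODE argument to extract the decay rate. The genuinely new difficulty compared with Theorem \ref{th1.1} is the quadratic nonlocal nonlinearity $\nabla\cdot(u\nabla\psi)$ with $\nabla\psi=\nabla(-\Delta)^{-1}u$, which must be controlled in negative Besov norms and across the two integrability indices $r\le p$.

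\emph{Step 1 (global solution).} For $\|u_{0}\|_{\dot{B}^{-1+2/p}_{p,1}}<\varepsilon$ small, the global existence, uniqueness and the stated regularity class, together with the key bound $\int_{0}^{\infty}\|u(\tau)\|_{\dot{B}^{2/p}_{p,1}}\,d\tau\lesssim\varepsilon$, are already available (see \cite{Z15}). I will use this small, time–integrable quantity as the multiplier in all nonlinear estimates.

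\emph{Step 2 (propagation of the negative norm, giving \eqref{eq3.5}).} Applying $\Delta_{j}$ to \eqref{eq3.4} and testing with $|\Delta_{j}u|^{r-2}\Delta_{j}u$ as in \eqref{eq1.4}, Bernstein's inequality (Lemma \ref{le2.3}) produces the lower bound $\kappa 2^{j}\|\Delta_{j}u\|_{L^{r}}^{r}$ for the dissipation, so that
\begin{equation*}
  \frac{d}{dt}\|\Delta_{j}u\|_{L^{r}}+\kappa 2^{j}\|\Delta_{j}u\|_{L^{r}}\le \|\Delta_{j}\nabla\cdot(u\nabla\psi)\|_{L^{r}}.
\end{equation*}
Multiplying by $2^{-js}$ and taking $l^{\infty}$ in $j$ yields
\begin{equation*}
  \frac{d}{dt}\|u\|_{\dot{B}^{-s}_{r,\infty}}+\kappa\|u\|_{\dot{B}^{-s+1}_{r,\infty}}\le \|\nabla\cdot(u\nabla\psi)\|_{\dot{B}^{-s}_{r,\infty}}.
\end{equation*}
The crucial estimate is then
\begin{equation*}
  \|\nabla\cdot(u\nabla\psi)\|_{\dot{B}^{-s}_{r,\infty}}\lesssim \|u\|_{\dot{B}^{2/p}_{p,1}}\,\|u\|_{\dot{B}^{-s}_{r,\infty}},
\end{equation*}
which I would prove by Bony's decomposition $u\nabla\psi=T_{u}\nabla\psi+T_{\nabla\psi}u+R(u,\nabla\psi)$, using $\|\nabla\psi\|_{\dot{B}^{\sigma+1}_{q,\cdot}}\approx\|u\|_{\dot{B}^{\sigma}_{q,\cdot}}$ and pairing, in each of the three pieces, the rough factor (measured in $\dot{B}^{-s}_{r,\infty}$) with the smooth, time–integrable factor (measured in $\dot{B}^{2/p}_{p,1}$). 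The hypothesis $1-\tfrac{2}{p}<s<1+\tfrac{2}{p}$ is precisely what places the paraproduct and remainder terms in the admissible range of the product laws. Grönwall's lemma together with $\int_{0}^{\infty}\|u\|_{\dot{B}^{2/p}_{p,1}}\,d\tau<\infty$ then gives the uniform bound \eqref{eq3.5}.

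\emph{Step 3 (decay, giving \eqref{eq3.6}).} Summing the block inequality with weights $2^{j\ell}$ in $l^{1}$ gives
\begin{equation*}
  \frac{d}{dt}\|u\|_{\dot{B}^{\ell}_{r,1}}+\kappa\|u\|_{\dot{B}^{\ell+1}_{r,1}}\le \|\nabla\cdot(u\nabla\psi)\|_{\dot{B}^{\ell}_{r,1}},
\end{equation*}
and the analogue of the nonlinear bound of Step 2, with the small factor multiplying the dissipation norm, lets me absorb the right–hand side and reach $\frac{d}{dt}\|u\|_{\dot{B}^{\ell}_{r,1}}+\tfrac{\kappa}{2}\|u\|_{\dot{B}^{\ell+1}_{r,1}}\le 0$. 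Now I exploit \eqref{eq3.5}: by the embedding $\dot{B}^{-s}_{r,\infty}\hookrightarrow\dot{B}^{-s-2(1/r-1/p)}_{p,\infty}$ (Lemma \ref{le2.4} v)) the preserved quantity controls the negative norm at the $L^{p}$ scale at regularity $-\sigma$, where $\sigma:=s+2(\tfrac{1}{r}-\tfrac{1}{p})$. Interpolating between this bounded endpoint and the dissipation norm exactly as in \eqref{eq1.9}, with Bernstein's inequality converting the $L^{p}$ information at low frequencies into the integrability–gain factor, produces
\begin{equation*}
  \|u\|_{\dot{B}^{\ell+1}_{r,1}}\ge C\,\|u\|_{\dot{B}^{\ell}_{r,1}}^{1+\frac{1}{\ell+\sigma}},
\end{equation*}
whence the autonomous inequality $\frac{d}{dt}y+C\,y^{1+1/(\ell+\sigma)}\le 0$ with $y=\|u\|_{\dot{B}^{\ell}_{r,1}}$. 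Solving it as in \eqref{eq1.10} gives the rate $(1+t)^{-(\ell+\sigma)}=(1+t)^{-(\ell+s)-2(1/r-1/p)}$, which is \eqref{eq3.6}; the lower endpoint $\ell=-\sigma$ corresponds to the borderline exponent $0$ and is covered directly by \eqref{eq3.5}.

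I expect the main obstacle to be Step 2 (and its variant in Step 3): controlling the nonlocal quadratic term $\nabla\cdot(u\nabla\psi)$ in the weak topology $\dot{B}^{-s}_{r,\infty}$ while simultaneously bridging the two integrability indices $r\le p$. The divergence structure is essential, since it lets the single dissipative derivative $\Lambda$ compensate the derivative in $\nabla\cdot$; the delicate point is to arrange the paraproduct/remainder splitting so that the rough factor always carries the conserved negative norm and the smooth factor always carries the small, time–integrable $\dot{B}^{2/p}_{p,1}$ norm, and then to verify that the interpolation producing the integrability–gain factor $2(1/r-1/p)$ is admissible throughout the stated ranges of $s$ and $\ell$.
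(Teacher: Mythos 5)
Your Steps 1 and 2 are sound and coincide with the paper's treatment: Step 2 is exactly Proposition \ref{pro5.3} specialized to $\ell=-s$ (the paper phrases the Gr\"onwall absorption via the exponential weight $e^{-K\mathcal{Z}(t)}$, which is the same thing), and your range $1-\frac{2}{p}<s<1+\frac{2}{p}$ matches the admissibility condition $-1-\frac{2}{p}<\ell<-1+\frac{2}{p}$ there. The genuine gap is in Step 3, at the inequality $\|u\|_{\dot{B}^{\ell+1}_{r,1}}\ge C\|u\|_{\dot{B}^{\ell}_{r,1}}^{1+1/(\ell+\sigma)}$ with $\sigma=s+2(\frac{1}{r}-\frac{1}{p})$. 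To obtain it you would need
\begin{equation*}
\|u\|_{\dot{B}^{\ell}_{r,1}}\le C\|u\|_{\dot{B}^{-\sigma}_{p,\infty}}^{\vartheta}\|u\|_{\dot{B}^{\ell+1}_{r,1}}^{1-\vartheta},
\qquad \vartheta=\frac{1}{\ell+\sigma+1},
\end{equation*}
an interpolation whose bounded endpoint lives at integrability $p$ while the conclusion lives at integrability $r<p$. This is false: Bernstein's inequality (Lemma \ref{le2.3}) raises integrability, never lowers it, so low-frequency $L^{p}$ information cannot control $L^{r}$ norms. Concretely, for a single frequency-localized block $u_{\epsilon}(x)=\psi(\epsilon x)e^{ix\cdot e_{1}}$ with $\hat{\psi}$ supported near the origin, one has $\|u_{\epsilon}\|_{\dot{B}^{m}_{q,\cdot}}\approx \epsilon^{-2/q}$ for every regularity $m$, and the displayed inequality reduces to $\epsilon^{-2\vartheta(\frac{1}{r}-\frac{1}{p})}\le C$, which fails as $\epsilon\to 0$. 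The only legitimate $r$-scale interpolation is against the preserved norm $\dot{B}^{-s}_{r,\infty}$ of \eqref{eq3.5}, and that yields only $\|u\|_{\dot{B}^{\ell+1}_{r,1}}\ge C\|u\|_{\dot{B}^{\ell}_{r,1}}^{1+1/(\ell+s)}$, hence the rate $(1+t)^{-(\ell+s)}$: the gain $2(\frac{1}{r}-\frac{1}{p})$ cannot be produced inside an $r$-scale differential inequality. Two smaller problems: your absorption step needs the remainder estimate, whose frequency kernel $2^{(\ell+1+\frac{2}{p})(j-j')}$, $j'\ge j-N_{0}$, sums only when $\ell>-1-\frac{2}{p}$, a condition that can fail near the left endpoint $\ell=-\sigma$ when $r<p$ and $s$ is close to $1+\frac{2}{p}$; and the endpoint $\ell=-\sigma$ is not ``covered directly by \eqref{eq3.5}'', since $\dot{B}^{-s}_{r,\infty}$ boundedness gives neither the regularity shift to $-\sigma$ nor the $l^{1}$ summation.

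The paper's route avoids this trap by never running the nonlinear ODE at the $r$-scale. Proposition \ref{pro5.1} gives a weighted energy inequality for the critical $p$-scale norm $\mathcal{U}(t)=\|u(t)\|_{\dot{B}^{-1+2/p}_{p,1}}$ with dissipation $\mathcal{Z}'(t)=\|u(t)\|_{\dot{B}^{2/p}_{p,1}}$, the nonlinearity being dominated by $C\mathcal{Z}'(t)\mathcal{U}(t)$ and absorbed by the weight $e^{-K\mathcal{Z}(t)}$ (no smallness is needed at that point). The preserved norm is then transported to the $p$-scale by the embedding $\dot{B}^{-s}_{r,\infty}\hookrightarrow\dot{B}^{-\sigma}_{p,\infty}$, and interpolation \emph{at fixed integrability} $p$ (Lemma \ref{le2.4} vi)) gives $\mathcal{Z}'(t)\ge C\,\mathcal{U}(t)^{1+\frac{1}{s+2/r-1}}$; solving the resulting ODE yields \eqref{eq5.14}, the decay of the critical norm at the enhanced rate $(1+t)^{-(s+\frac{2}{r}-1)}$. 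Thus the gain $2(\frac{1}{r}-\frac{1}{p})$ enters through the embedding, not through any Bernstein step at the $r$-scale, and the general $\ell$ in \eqref{eq3.6} is obtained afterwards by one more interpolation against $\dot{B}^{-\sigma}_{p,\infty}$. (You are in good company in one respect: that final passage of the paper also interpolates two $p$-scale norms to bound an $r$-scale norm, which is open to the same objection when $r<p$; but the rigorous core of the paper --- the enhanced decay of the $p$-scale norms --- is reached by the mechanism just described, which your proposal never arrives at.) To repair your argument, derive the decay of $\|u(t)\|_{\dot{B}^{-1+2/p}_{p,1}}$ via Proposition \ref{pro5.1} first, and only then interpolate downward in regularity.
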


\noindent\textbf{Remark 3.4}  We mention here that similar result as that of  Theorem \ref{th3.2} still holds to the subcritical Keller-Segel system ($1<\alpha\le 2$).

The proof of Theorem \ref{th3.2} will be given in Section 5.

\section{Proof of Theorem  \ref{th3.1}}

Notice that the global well-posedness part with small initial data has been proved by \cite{AH08} and \cite{CMZ07}, respectively, thus we need only to prove the temporal decay part in Theorem \ref{th3.1}.  We first aim at establishing two basic energy inequalities in the framework of homogeneous Besov spaces, then prove the decay estimates
\eqref{eq3.2} and \eqref{eq3.3}  by using the approach illustrated in Theorem \ref{th1.1},  the Fourier splitting approach and the interpolation theory.

\subsection{Basic energy inequalities}
Let
$$
  \mathcal{E}(t):= \|\theta(t)\|_{\dot{B}^{1+\frac{2}{p}-\alpha}_{p,1}}\ \ \ \text{and}\ \ \ \mathcal{Y}(t):=\int_{0}^{t} \|\theta(\tau)\|_{\dot{B}^{1+\frac{2}{p}}_{p,1}}d\tau.
$$

\begin{proposition}\label{pro4.1}
Under the assumptions of Theorem \ref{th3.1}, let $\theta$ be the solution of  the equation \eqref{eq3.1} corresponding to the initial data $\theta_{0}$.  Then there exist two constants $\kappa$ and $K$ such that the following inequality holds:
\begin{equation}\label{eq4.1}
   \frac{d}{dt}(e^{-K\mathcal{Y}(t)}\mathcal{E}(t))+\kappa e^{-K\mathcal{Y}(t)}\mathcal{Y}'(t)\leq 0.
\end{equation}
\end{proposition}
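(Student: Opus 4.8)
The plan is to run a frequency-localized $L^p$ energy estimate on \eqref{eq3.1}, converting the transport term into a commutator so that its contribution can be absorbed by the dissipation up to the multiplicative factor $\mathcal{E}(t)$. First I would apply $\Delta_{j}$ to \eqref{eq3.1}, obtaining $\partial_{t}\Delta_{j}\theta+\Lambda^{\alpha}\Delta_{j}\theta=-[\Delta_{j},\mathbf{u}\cdot\nabla]\theta-\mathbf{u}\cdot\nabla\Delta_{j}\theta$, and then pair with $|\Delta_{j}\theta|^{p-2}\Delta_{j}\theta$ in $L^{2}$. Since $\nabla\cdot\mathbf{u}=0$, the diagonal transport term integrates to zero, $\int_{\mathbb{R}^{2}}\mathbf{u}\cdot\nabla\Delta_{j}\theta\,|\Delta_{j}\theta|^{p-2}\Delta_{j}\theta\,dx=\frac{1}{p}\int_{\mathbb{R}^{2}}\mathbf{u}\cdot\nabla|\Delta_{j}\theta|^{p}\,dx=0$, which is exactly the cancellation that avoids a loss of one derivative. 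The dissipative term is bounded below by the generalized Bernstein positivity inequality (the same lower bound used in \eqref{eq1.4}--\eqref{eq1.5}), giving $\int_{\mathbb{R}^{2}}\Lambda^{\alpha}\Delta_{j}\theta\,|\Delta_{j}\theta|^{p-2}\Delta_{j}\theta\,dx\ge\kappa 2^{\alpha j}\|\Delta_{j}\theta\|_{L^{p}}^{p}$. After dividing by $\|\Delta_{j}\theta\|_{L^{p}}^{p-1}$ and applying Hölder to the remaining term, I arrive at the localized inequality $\frac{d}{dt}\|\Delta_{j}\theta\|_{L^{p}}+\kappa 2^{\alpha j}\|\Delta_{j}\theta\|_{L^{p}}\le\|[\Delta_{j},\mathbf{u}\cdot\nabla]\theta\|_{L^{p}}$.

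Next I would multiply by $2^{j(1+\frac{2}{p}-\alpha)}$ and take the $\ell^{1}$ sum over $j\in\mathbb{Z}$, which yields $\mathcal{E}'(t)+\kappa\mathcal{Y}'(t)\le\sum_{j}2^{j(1+\frac{2}{p}-\alpha)}\|[\Delta_{j},\mathbf{u}\cdot\nabla]\theta\|_{L^{p}}$, where $\mathcal{Y}'(t)=\|\theta\|_{\dot{B}^{1+\frac{2}{p}}_{p,1}}$. The whole problem thus reduces to the bilinear commutator bound $\sum_{j}2^{j(1+\frac{2}{p}-\alpha)}\|[\Delta_{j},\mathbf{u}\cdot\nabla]\theta\|_{L^{p}}\le K\|\theta\|_{\dot{B}^{1+\frac{2}{p}}_{p,1}}\|\theta\|_{\dot{B}^{1+\frac{2}{p}-\alpha}_{p,1}}=K\mathcal{Y}'(t)\mathcal{E}(t)$, which is scaling-consistent in $\mathbb{R}^{2}$ (the two exponents differ by $\alpha$, matching the extra derivative carried by $\mathbf{u}\cdot\nabla\theta$).

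The \textbf{main obstacle} is precisely this commutator estimate. I would prove it via Bony's decomposition $\mathbf{u}\cdot\nabla\theta=\sum_{k}\big(T_{u^{k}}\partial_{k}\theta+T_{\partial_{k}\theta}u^{k}+R(u^{k},\partial_{k}\theta)\big)$, noting that only the first paraproduct produces a genuine commutator $[\Delta_{j},T_{u^{k}}]\partial_{k}\theta=\sum_{|j'-j|\le4}[\Delta_{j},S_{j'-1}u^{k}]\Delta_{j'}\partial_{k}\theta$, which is controlled by a first-order Taylor expansion of the kernel $h$ producing the gain $2^{-j}\|\nabla S_{j'-1}\mathbf{u}\|_{L^{\infty}}$; the remaining paraproduct and the remainder are estimated directly. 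Since $\mathbf{u}=(-\mathcal{R}_{2}\theta,\mathcal{R}_{1}\theta)$ and Riesz transforms preserve Besov norms, $\|\mathbf{u}\|_{\dot{B}^{s}_{p,1}}\approx\|\theta\|_{\dot{B}^{s}_{p,1}}$, while the embedding $\dot{B}^{1+\frac{2}{p}}_{p,1}\hookrightarrow\dot{B}^{1}_{\infty,1}$ together with $\dot{B}^{0}_{\infty,1}\hookrightarrow L^{\infty}$ (items ii), iv), v) of Lemma \ref{le2.4}) provides $\|\nabla\mathbf{u}\|_{L^{\infty}}+\|\nabla\theta\|_{L^{\infty}}\lesssim\|\theta\|_{\dot{B}^{1+\frac{2}{p}}_{p,1}}=\mathcal{Y}'(t)$; assembling these after the summation in $j$ delivers the factor $\mathcal{Y}'(t)\mathcal{E}(t)$. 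The delicate point is that the index $1+\frac{2}{p}-\alpha$ ranges over $(0,2)$ under the hypotheses $\alpha\in(0,1]$, $p\in[2,\infty)$, so the commutator estimate must be used in a form valid on this entire range rather than the textbook window; such an estimate is exactly the one underlying the global well-posedness results of \cite{AH08} and \cite{CMZ07}, which I would invoke here.

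Finally, with the differential inequality $\mathcal{E}'(t)+\kappa\mathcal{Y}'(t)\le K\mathcal{Y}'(t)\mathcal{E}(t)$ in hand, I would multiply by the integrating factor $e^{-K\mathcal{Y}(t)}$ and use $\frac{d}{dt}\big(e^{-K\mathcal{Y}(t)}\mathcal{E}(t)\big)=e^{-K\mathcal{Y}(t)}\big(\mathcal{E}'(t)-K\mathcal{Y}'(t)\mathcal{E}(t)\big)$ to conclude $\frac{d}{dt}\big(e^{-K\mathcal{Y}(t)}\mathcal{E}(t)\big)+\kappa e^{-K\mathcal{Y}(t)}\mathcal{Y}'(t)\le0$, which is exactly \eqref{eq4.1}.
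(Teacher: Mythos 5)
Your proposal is correct and follows essentially the same route as the paper: a frequency-localized $L^{p}$ energy estimate with the divergence-free cancellation, the generalized Bernstein lower bound $\int\Lambda^{\alpha}\Delta_{j}\theta\,|\Delta_{j}\theta|^{p-2}\Delta_{j}\theta\,dx\ge\kappa2^{\alpha j}\|\Delta_{j}\theta\|_{L^{p}}^{p}$, and a Bony-decomposition commutator estimate of size $d_{j}2^{-(1+\frac{2}{p}-\alpha)j}\mathcal{Y}'(t)\mathcal{E}(t)$ (the paper's Lemma \ref{le4.2}). The only difference is bookkeeping: the paper introduces the weighted unknown $\widetilde{\theta}=e^{-K\mathcal{Y}(t)}\theta$ at the outset and chooses $K>C$ to absorb the commutator contribution inside the equation, whereas you derive the unweighted inequality $\mathcal{E}'+\kappa\mathcal{Y}'\le K\mathcal{Y}'\mathcal{E}$ and apply the integrating factor $e^{-K\mathcal{Y}(t)}$ at the end — the two computations are identical.
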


To prove Proposition \ref{pro4.1}, we set
\begin{equation*}
  \widetilde{\theta}(t):=e^{-K\mathcal{Y}(t)}\theta(t)\ \ \ \text{and}\ \ \ \widetilde{\mathbf{u}}(t):= e^{-K\mathcal{Y}(t)}\mathbf{u}(t)=(-\mathcal{R}_{2}\widetilde{\theta}(t),\mathcal{R}_{1}\widetilde{\theta}(t)),
\end{equation*}
where $K$ is a constant to be specified later. It is clear that $\widetilde{\theta}$ satisfies the following equation:
\begin{equation}\label{eq4.2}
  \partial_{t} \widetilde{\theta}+\mathbf{u}\cdot\nabla \widetilde{\theta}+\Lambda^{\alpha}
  \widetilde{\theta}=-K\mathcal{Y}'(t)\widetilde{\theta}.
\end{equation}
Thus we need to establish the following commutator estimate.

\begin{lemma}\label{le4.2}
Let $2\le p<\infty$. Then we have
\begin{equation}\label{eq4.3}
   \|[\mathbf{u}, \Delta_{j}]\cdot\nabla
   \widetilde{\theta}\|_{L^{p}}\lesssim 2^{-(1+\frac{2}{p}-\alpha)j}d_{j}\mathcal{Y}'(t)\|\widetilde{\theta}\|_{\dot{B}^{1+\frac{2}{p}-\alpha}_{p,1}}.
\end{equation}
\end{lemma}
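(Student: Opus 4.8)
The plan is to expand the commutator and reduce it, via Bony's paraproduct decomposition, to a sum of terms in which the extra derivative carried by $\nabla\widetilde{\theta}$ can always be charged to the velocity $\mathbf{u}$ (which enjoys the higher regularity budget $\dot{B}^{1+\frac{2}{p}}_{p,1}$, i.e.\ $\mathcal{Y}'(t)$), never to $\widetilde{\theta}$ (which only lives in $\dot{B}^{1+\frac{2}{p}-\alpha}_{p,1}$). Writing $\mathbf{u}\cdot\nabla=\sum_{k}u^k\partial_k$ and using $\Delta_j\partial_k=\partial_k\Delta_j$, I decompose each product $u^k\partial_k\widetilde{\theta}$ by Bony's formula and subtract the corresponding decomposition of $u^k\partial_k\Delta_j\widetilde{\theta}$, which yields
\begin{align*}
[\mathbf{u},\Delta_j]\cdot\nabla\widetilde{\theta}
&=\sum_{k}\Big([\Delta_j,T_{u^k}]\partial_k\widetilde{\theta}+\Delta_j\big(T_{\partial_k\widetilde{\theta}}u^k\big)-T_{\partial_k\Delta_j\widetilde{\theta}}u^k\\
&\qquad+\Delta_j R(u^k,\partial_k\widetilde{\theta})-R(u^k,\partial_k\Delta_j\widetilde{\theta})\Big).
\end{align*}
The first term is the genuine paraproduct commutator; the remaining four are estimated directly.

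For the commutator term I would use the standard first-order Taylor (mean value) argument on the kernel of $\Delta_j$: writing $[\Delta_j,S_{j'-1}u^k]g(x)=\int 2^{2j}h(2^j(x-y))\big(S_{j'-1}u^k(y)-S_{j'-1}u^k(x)\big)g(y)\,dy$ and using $\big\||\cdot|\,2^{2j}h(2^j\cdot)\big\|_{L^1}\approx 2^{-j}$ produces a factor $2^{-j}\|\nabla\mathbf{u}\|_{L^\infty}$; combined with $\|\partial_k\Delta_{j'}\widetilde{\theta}\|_{L^p}\lesssim 2^{j'}\|\Delta_{j'}\widetilde{\theta}\|_{L^p}$ and $j'\approx j$ (forced by \eqref{eq2.2}) the two powers of $2^{j}$ cancel. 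Since $\dot{B}^{1+\frac{2}{p}}_{p,1}\hookrightarrow\dot{B}^{1}_{\infty,1}$ (Lemma \ref{le2.4}) and the Riesz transforms are bounded on $L^p$ for $p\in[2,\infty)$, one has $\|\nabla\mathbf{u}\|_{L^\infty}\lesssim\|\mathbf{u}\|_{\dot{B}^{1+\frac{2}{p}}_{p,1}}\approx\mathcal{Y}'(t)$, while the $\dot{B}^{s}_{p,1}$-characterization of $\widetilde{\theta}$ (property (3) of the preliminaries) supplies the coefficient $2^{-sj'}d_{j'}$ with $s=1+\tfrac{2}{p}-\alpha$, which sums to $d_j$ over the finitely many admissible $j'$.

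For the reverse paraproduct and remainder terms I would estimate each dyadic piece crudely with Bernstein (Lemma \ref{le2.3}), always placing the high-frequency factor of $u^k$ in $L^p$ at cost $2^{-(1+\frac{2}{p})j'}d_{j'}$ and the factor carrying $\widetilde{\theta}$ in $L^\infty$ at cost $2^{(1+\frac{2}{p})j'}2^{-sj'}d_{j'}=2^{\alpha j'}d_{j'}$. The resulting geometric sums converge precisely because $0<\alpha$ (the low-frequency sum $\sum_{k'\le j}2^{\alpha k'}d_{k'}\lesssim 2^{\alpha j}$ appearing in $\Delta_j(T_{\partial_k\widetilde{\theta}}u^k)$) and because $s=1+\tfrac{2}{p}-\alpha>0$ (the high-frequency sum in the remainder terms, together with $\sum_j d_j^2\le\sum_j d_j=1$), both guaranteed by $\alpha\in(0,1]$. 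Collecting all five terms then bounds the commutator by $2^{-sj}d_j\,\mathcal{Y}'(t)\|\widetilde{\theta}\|_{\dot{B}^{s}_{p,1}}$, which is exactly \eqref{eq4.3}.

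The main obstacle is the asymmetric bookkeeping. The classical symmetric commutator estimate would leave a term of the type $\|\nabla\widetilde{\theta}\|_{L^\infty}\|\mathbf{u}\|_{\dot{B}^{s}_{p,1}}$, but $\|\nabla\widetilde{\theta}\|_{L^\infty}$ demands the regularity $\dot{B}^{1+\frac{2}{p}}_{p,1}$, which $\widetilde{\theta}$ does \emph{not} possess when $\alpha>0$. Hence the decomposition must be arranged so that the derivative landing on $\widetilde{\theta}$ is always absorbed at the $\dot{B}^{s}$-level while the surplus derivative is charged to $\mathbf{u}$; checking that the associated low- and high-frequency dyadic sums converge, which rests squarely on $0<\alpha\le 1$, is the delicate point. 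I note finally that the divergence-free identity $u^k\partial_k\,\cdot=\partial_k(u^k\,\cdot)$ can additionally be invoked to rewrite the remainders as $\partial_k R(u^k,\widetilde{\theta})$, making the derivative gain explicit, but it is not essential to the argument.
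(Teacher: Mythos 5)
Your proposal is correct and follows essentially the same route as the paper's proof: a Bony paraproduct decomposition of the commutator, a first-order Taylor (mean value) argument on the genuine paraproduct-commutator piece $[\Delta_j,T_{u^k}]\partial_k\widetilde{\theta}$ giving the factor $2^{-j}\|\nabla\mathbf{u}\|_{L^\infty}\lesssim 2^{-j}\mathcal{Y}'(t)$, and Bernstein's inequality together with the $d_j$-characterization of Besov norms for the remaining terms, with convergence of the dyadic sums resting on $\alpha>0$ and $1+\frac{2}{p}-\alpha>0$. The only differences are bookkeeping: the paper groups the terms as four rather than five (combining your $-T_{\partial_k\Delta_j\widetilde{\theta}}u^k$ and $-R(u^k,\partial_k\Delta_j\widetilde{\theta})$ into a single $T'_{\Delta_j\partial_i\theta}\widetilde{u}^i$) and exploits $\nabla\cdot\mathbf{u}=0$ plus the swap $u^i\widetilde{\theta}=\theta\widetilde{u}^i$ to distribute the scalar weight $e^{-K\mathcal{Y}(t)}$, whereas you charge $u$ directly to $\mathcal{Y}'(t)$ via the Riesz-transform and embedding bounds, which works equally well.
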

\begin{proof}
We mention that throughout the paper,  the summation convention over repeated indices $i=1,2$ is used.
Thanks to Bony's paraproduct decomposition, by $\nabla\cdot\mathbf{u}=0$, we can decompose the commutator $[\mathbf{u}, \Delta_{j}]\cdot\nabla
   \widetilde{\theta}$ into the following terms:
\begin{equation}\label{eq4.4}
   [\mathbf{u}, \Delta_{j}]\cdot\nabla
   \widetilde{\theta}=[T_{u^{i}}, \Delta_{j}]\partial_{i}\widetilde{\theta}+T'_{\Delta_{j}\partial_{i}\theta}\widetilde{u}^{i}
   -\Delta_{j}T_{\partial_{i}\theta}\widetilde{u}^{i}-\Delta_{j}\partial_{i}R(\theta, \widetilde{u}^{i}),
\end{equation}
where $T'_{f}g:=T_{f}g+R(f,g)$. Applying the fact \eqref{eq2.2}, Lemmas \ref{le2.3} and
\ref{le2.4}, we infer from the first order Taylor's formula that
\begin{align*}
  \|[T_{u^{i}}, \Delta_{j}]\partial_{i}\widetilde{\theta}\|_{L^{p}}
  &\lesssim  \sum_{|j'-j|\le4}\|[S_{j'-1}u^{i}, \Delta_{j}]\Delta_{j'}\partial_{i}\widetilde{\theta}\|_{L^{p}}\nonumber\\
   &\lesssim \sum_{|j'-j|\le4}2^{-j'}\Big\|\int_{\mathbb{R}^{2}}\int_{0}^{1}h(y)\big(y\cdot\nabla S_{j'-1}u^{i}(x-2^{-j}\tau y)\big)
   \Delta_{j'}\partial_{i}\widetilde{\theta}(x-2^{-j}y)d\tau dy\Big\|_{L^{p}}\nonumber\\
   &\lesssim \sum_{|j'-j|\le4}2^{-j}\|\nabla S_{j'-1}u^{i}\|_{L^{\infty}}\|\Delta_{j'}\partial_{i}\widetilde{\theta}\|_{L^{p}}\nonumber\\
   &\lesssim \sum_{|j'-j|\le4}\sum_{k\le j'-2}2^{(1+\frac{2}{p})k}\|\Delta_{k}\theta\|_{L^{p}}\|\Delta_{j'}\widetilde{\theta}\|_{L^{p}}
  \nonumber\\
   &\lesssim 2^{-(1+\frac{2}{p}-\alpha)j}d_{j}\|\theta\|_{\dot{B}^{1+\frac{2}{p}}_{p,1}}
  \|\widetilde{\theta}\|_{\dot{B}^{1+\frac{2}{p}-\alpha}_{p,1}}
  \nonumber\\
  &\lesssim 2^{-(1+\frac{2}{p}-\alpha)j}d_{j}\mathcal{Y}'(t)\|\widetilde{\theta}\|_{\dot{B}^{1+\frac{2}{p}-\alpha}_{p,1}},
\end{align*}
where we have used the boundedness of Riesz operators in $L^{p}(\mathbb{R}^{2})$ with
$1<p<\infty$ to deduce that
$$
  \|\mathbf{u}\|_{L^{p}}\lesssim \|(-\mathcal{R}_{2}\theta,\mathcal{R}_{1}\theta)\|_{L^{p}}\lesssim \|\theta\|_{L^{p}}.
$$
Thanks to \eqref{eq2.2} again,  there exists a constant $N_{0}$ such that the second term in the right-hand side of \eqref{eq4.4} can be  rewritten as
$$
    T'_{\Delta_{j}\partial_{i}\theta}\widetilde{u}^{i}=\sum_{j'\ge j-N_{0}}S_{j'+2}\Delta_{j}\partial_{i}\theta\Delta_{j'}\widetilde{u}^{i},
$$
whence,
\begin{align*}
  \|T'_{\Delta_{j}\partial_{i}\theta}\widetilde{u}^{i}\|_{L^{p}}&\lesssim
  \sum_{j'\ge j-N_{0}}\|\Delta_{j'}\widetilde{u}^{i}\|_{L^{p}}\|\Delta_{j}\partial_{i}\theta\|_{L^{\infty}}\nonumber\\
  &\lesssim 2^{-(1+\frac{2}{p}-\alpha)j}\sum_{j'\ge j-N_{0}}2^{-(1+\frac{2}{p}-\alpha)(j'-j)}2^{(1+\frac{2}{p}-\alpha)j'}\|\Delta_{j'} \widetilde{\theta}\|_{L^{p}}
  2^{(1+\frac{2}{p})j}\|\Delta_{j}\theta\|_{L^{p}}
  \nonumber\\
   &\lesssim 2^{-(1+\frac{2}{p}-\alpha)j}d_{j}\|\theta\|_{\dot{B}^{1+\frac{2}{p}}_{p,1}}
  \|\widetilde{\theta}\|_{\dot{B}^{1+\frac{2}{p}-\alpha}_{p,1}}
  \nonumber\\
  &\lesssim 2^{-(1+\frac{2}{p}-\alpha)j}d_{j}\mathcal{Y}'(t)\|\widetilde{\theta}\|_{\dot{B}^{1+\frac{2}{p}-\alpha}_{p,1}}.
\end{align*}
On the other hand, the remaining two terms in the right-hand side of \eqref{eq4.4} can be estimated as
\begin{align*}
  \|\Delta_{j}T_{\partial_{i}\theta}\widetilde{u}^{i}\|_{L^{p}}&\lesssim
  \sum_{|j'-j|\le4}\|S_{j'-1}\partial_{i}\theta\|_{L^{\infty}}\|\Delta_{j'}\widetilde{u}^{i}\|_{L^{p}}\nonumber\\
  &\lesssim \sum_{|j'-j|\le4}\sum_{k\le j'-2}2^{(1+\frac{2}{p})k}\|\Delta_{k} \theta\|_{L^{p}}
 \|\Delta_{j'}\widetilde{\theta}\|_{L^{p}}\nonumber\\
   &\lesssim 2^{-(1+\frac{2}{p}-\alpha)j}d_{j}\|\theta\|_{\dot{B}^{1+\frac{2}{p}}_{p,1}}
  \|\widetilde{\theta}\|_{\dot{B}^{1+\frac{2}{p}-\alpha}_{p,1}}
  \nonumber\\
  &\lesssim 2^{-(1+\frac{2}{p}-\alpha)j}d_{j}\mathcal{Y}'(t)\|\widetilde{\theta}\|_{\dot{B}^{1+\frac{2}{p}-\alpha}_{p,1}};
\end{align*}
\begin{align*}
  \|\Delta_{j}\partial_{i}R(\theta, \widetilde{u}^{i})\|_{L^{p}}
  &\lesssim 2^{(1+\frac{2}{p})j}\sum_{j'\ge j-N_{0}}\|\Delta_{j'}\theta\|_{L^{p}}\|\widetilde{\Delta}_{j'}\widetilde{u}^{i}\|_{L^{p}}\nonumber\\
  &\lesssim 2^{(1+\frac{2}{p})j}\sum_{j'\ge j-N_{0}}2^{-(2+\frac{4}{p}-\alpha)j'}2^{(1+\frac{2}{p})j'}\|\Delta_{j'}\theta\|_{L^{p}}
  2^{(1+\frac{2}{p}-\alpha)j'}\|\widetilde{\Delta}_{j'}\widetilde{\theta}\|_{L^{p}}\nonumber\\
  &\lesssim 2^{-(1+\frac{2}{p}-\alpha)j}d_{j}\|\theta\|_{\dot{B}^{1+\frac{2}{p}}_{p,1}}
  \|\widetilde{\theta}\|_{\dot{B}^{1+\frac{2}{p}-\alpha}_{p,1}}
  \nonumber\\
  &\lesssim 2^{-(1+\frac{2}{p}-\alpha)j}d_{j}\mathcal{Y}'(t)\|\widetilde{\theta}\|_{\dot{B}^{1+\frac{2}{p}-\alpha}_{p,1}}.
\end{align*}
We finish the proof of Lemma
\ref{le4.2}.
\end{proof}

\medskip

\textbf{Proof  of Proposition \ref{pro4.1}}
 Applying the dyadic partition operator $\Delta_{j}$ to the equation \eqref{eq4.2}, then taking $L^{2}$ inner product with $|\Delta_{j}\widetilde{\theta}|^{p-2}\Delta_{j}\widetilde{\theta}$ yields that
\begin{align}\label{eq4.5}
  \frac{1}{p}\frac{d}{dt}\|\Delta_{j}\widetilde{\theta}\|_{L^{p}}^{p}&
   +\big(\Lambda^{\alpha}\Delta_{j}\widetilde{\theta}\big||\Delta_{j}\widetilde{\theta}|^{p-2}\Delta_{j}\widetilde{\theta}\big)=-\big([\mathbf{u}, \Delta_{j}]\Delta_{j}\widetilde{\theta}\big{|}|\Delta_{j}\widetilde{\theta}|^{p-2}\Delta_{j}\widetilde{\theta}\big)
  -K\mathcal{Y}'(t)\|\Delta_{j}\widetilde{\theta}\|_{L^{p}}^{p}\nonumber\\
  &\leq \|[\mathbf{u}, \Delta_{j}]\Delta_{j}\widetilde{\theta}\|_{L^{p}}\|\Delta_{j}\widetilde{\theta}\|_{L^{p}}^{p-1}
  -K\mathcal{Y}'(t)\|\Delta_{j}\widetilde{\theta}\|_{L^{p}}^{p},
\end{align}
where we have used the fact
\begin{equation*}
  \int_{\mathbb{R}^{2}}\mathbf{u}\cdot\Delta_{j}\widetilde{\theta}|\Delta_{j}\widetilde{\theta}|^{p-2}\Delta_{j}\widetilde{\theta}dx=0
\end{equation*}
due to the fact $\nabla\cdot \mathbf{u}=0$.
Thanks to \cite{ CMZ07, W05}, there exists a positive constant $\kappa$
so that
\begin{equation*}
  \int_{\mathbb{R}^{2}}\Lambda^{\alpha}\Delta_{j}\widetilde{\theta}|\Delta_{j}\widetilde{\theta}|^{p-2}\Delta_{j}\widetilde{\theta}dx\geq
  \kappa2^{\alpha j}\|\Delta_{j}\widetilde{\theta}\|_{L^{p}}^{p}.
\end{equation*}
Therefore, it follows from \eqref{eq4.5} that
\begin{align*}
  \frac{d}{dt}\|\Delta_{j}\widetilde{\theta}\|_{L^{p}}+\kappa2^{\alpha j}\|\Delta_{j}\widetilde{\theta}\|_{L^{p}}
   &\lesssim \|[\mathbf{u}, \Delta_{j}]\Delta_{j}\widetilde{\theta}\|_{L^{p}}
   -K\mathcal{Y}'(t)\|\Delta_{j}\widetilde{\theta}\|_{L^{p}}.
\end{align*}
Applying Lemma \ref{le4.2} leads directly to
\begin{align*}
  \frac{d}{dt}\|\Delta_{j}\widetilde{\theta}\|_{L^{p}}+\kappa2^{\alpha j}\|\Delta_{j}\widetilde{\theta}\|_{L^{p}}
   \leq C2^{-(1+\frac{2}{p}-\alpha)j}d_{j}\mathcal{Y}'(t)\|\widetilde{\theta}\|_{\dot{B}^{1+\frac{2}{p}-\alpha}_{p,1}}
   -K\mathcal{Y}'(t)\|\Delta_{j}\widetilde{\theta}\|_{L^{p}},
\end{align*}
which gives us to
\begin{align}\label{eq4.6}
  \frac{d}{dt}\|\widetilde{\theta}\|_{\dot{B}^{1+\frac{2}{p}-\alpha}_{p,1}}+\kappa\|\widetilde{\theta}\|_{\dot{B}^{1+\frac{2}{p}}_{p,1}}
   \leq C\mathcal{Y}'(t)\|\widetilde{\theta}\|_{\dot{B}^{1+\frac{2}{p}-\alpha}_{p,1}}
   -K\mathcal{Y}'(t)\|\widetilde{\theta}\|_{\dot{B}^{1+\frac{2}{p}-\alpha}_{p,1}}.
\end{align}
Hence, by choosing $K$ sufficiently large such that $K>C$, we see that
\begin{align*}
  \frac{d}{dt}\|\widetilde{\theta}\|_{\dot{B}^{1+\frac{2}{p}-\alpha}_{p,1}}+
  \kappa\|\widetilde{\theta}(t)\|_{\dot{B}^{1+\frac{2}{p}}_{p,1}}
   &\leq 0.
\end{align*}
We complete the proof of Proposition \ref{pro4.1}.

\medskip

Let $\ell$ be a real number and $2\le r\le p$,  set
$$
\mathcal{F}(t):= \|\theta(t)\|_{\dot{B}^{\ell}_{r,\infty}}.
$$
 We can further obtain the following result.

\begin{proposition}\label{pro4.3}
Under the assumptions of Proposition \ref{pro4.1}, if we further assume that
$\theta_{0}\in \dot{B}^{\ell}_{r,\infty}(\mathbb{R}^{2})$  with $2\le r\le p$, and
$$
  -1-\frac{2}{p}<\ell<\frac{2}{p},
$$
then there exist two positive constants $\kappa$ and $K$ such that for all $t\geq0$,
\begin{align}\label{eq4.7}
  \frac{d}{dt}(e^{-K\mathcal{Y}(t)}\mathcal{F}(t))+\kappa e^{-K\mathcal{Y}(t)}\|\theta(t)\|_{\dot{B}^{\ell+\alpha}_{r,\infty}}\leq0.
\end{align}
\end{proposition}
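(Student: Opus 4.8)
The plan is to follow the scheme of Proposition~\ref{pro4.1}, but now at the negative regularity level $\ell$, in $L^{r}$ instead of $L^{p}$, and with the $l^{\infty}$ (rather than $l^{1}$) summation over the Littlewood--Paley blocks. First I keep the damped unknown $\widetilde{\theta}=e^{-K\mathcal{Y}(t)}\theta$, which solves \eqref{eq4.2}, localize it with $\Delta_{j}$, and take the $L^{2}$ inner product of the resulting equation with $|\Delta_{j}\widetilde{\theta}|^{r-2}\Delta_{j}\widetilde{\theta}$. Exactly as in Proposition~\ref{pro4.1}, the leading transport term drops out,
\[
\int_{\mathbb{R}^{2}}\mathbf{u}\cdot\nabla\Delta_{j}\widetilde{\theta}\,|\Delta_{j}\widetilde{\theta}|^{r-2}\Delta_{j}\widetilde{\theta}\,dx=\frac{1}{r}\int_{\mathbb{R}^{2}}\mathbf{u}\cdot\nabla|\Delta_{j}\widetilde{\theta}|^{r}\,dx=0
\]
because $\nabla\cdot\mathbf{u}=0$, leaving only the commutator $[\mathbf{u},\Delta_{j}]\cdot\nabla\widetilde{\theta}$, while the dissipative term is bounded below by $\kappa 2^{\alpha j}\|\Delta_{j}\widetilde{\theta}\|_{L^{r}}^{r}$ through the Bernstein inequality of Lemma~\ref{le2.3} (this is the only place the hypothesis $r\ge 2$ enters). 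After dividing by $\|\Delta_{j}\widetilde{\theta}\|_{L^{r}}^{r-1}$ I arrive at the per-frequency inequality
\[
\frac{d}{dt}\|\Delta_{j}\widetilde{\theta}\|_{L^{r}}+\kappa 2^{\alpha j}\|\Delta_{j}\widetilde{\theta}\|_{L^{r}}\le\|[\mathbf{u},\Delta_{j}]\cdot\nabla\widetilde{\theta}\|_{L^{r}}-K\mathcal{Y}'(t)\|\Delta_{j}\widetilde{\theta}\|_{L^{r}}.
\]

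The main obstacle is the commutator estimate, which is the analogue of Lemma~\ref{le4.2} at the level $\ell$ in $L^{r}$, namely
\[
\|[\mathbf{u},\Delta_{j}]\cdot\nabla\widetilde{\theta}\|_{L^{r}}\lesssim 2^{-\ell j}d_{j}\,\mathcal{Y}'(t)\,\|\widetilde{\theta}\|_{\dot{B}^{\ell}_{r,\infty}}.
\]
I would prove it with the same Bony decomposition as in \eqref{eq4.4}, but now measuring every $\widetilde{\theta}$-factor in $\dot{B}^{\ell}_{r,\infty}$ and every velocity factor in $\dot{B}^{1+\frac{2}{p}}_{p,1}$, whose norm is precisely $\mathcal{Y}'(t)=\|\theta\|_{\dot{B}^{1+\frac{2}{p}}_{p,1}}$ since $\mathbf{u}=(-\mathcal{R}_{2}\theta,\mathcal{R}_{1}\theta)$ and the Riesz transforms are bounded on $L^{p}$. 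The low-frequency velocity factors $\nabla S_{j'-1}u^{i}$ are controlled in $L^{\infty}$ via $\dot{B}^{\frac{2}{p}}_{p,1}\hookrightarrow\dot{B}^{0}_{\infty,1}\hookrightarrow L^{\infty}$ (Lemma~\ref{le2.4}), and each product is closed in $L^{r}$ by H\"older's inequality together with the Bernstein inequality, which lets me trade the extra $L^{p}$-integrability of the velocity for the required powers of $2^{j'}$. The delicate part is the convergence of the dyadic series that this produces: the high-frequency remainder interactions $R(\widetilde{\theta},u^{i})$ are summable only when $\ell>-1-\frac{2}{p}$ (here the balance $2\le r\le p$ in the Bernstein step is exactly what is needed), while the paraproduct and low-frequency contributions are summable only when $\ell<\frac{2}{p}$; these are precisely the hypotheses of the proposition, and in each case the bound is packaged into a generic sequence $d_{j}$ with $\sum_{j}d_{j}=1$.

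Granting this commutator estimate, I substitute it into the per-frequency inequality, multiply by $2^{\ell j}$, and take the $l^{\infty}$ norm over $j\in\mathbb{Z}$ exactly as in the passage from \eqref{eq1.5} to \eqref{eq1.6}. Using $0\le d_{j}\le 1$ to bound the commutator contribution by $C\mathcal{Y}'(t)\|\widetilde{\theta}\|_{\dot{B}^{\ell}_{r,\infty}}$, this yields
\[
\frac{d}{dt}\|\widetilde{\theta}\|_{\dot{B}^{\ell}_{r,\infty}}+\kappa\|\widetilde{\theta}\|_{\dot{B}^{\ell+\alpha}_{r,\infty}}\le(C-K)\mathcal{Y}'(t)\|\widetilde{\theta}\|_{\dot{B}^{\ell}_{r,\infty}}.
\]
Choosing $K$ large enough that $K>C$ makes the right-hand side nonpositive, so that
\[
\frac{d}{dt}\|\widetilde{\theta}\|_{\dot{B}^{\ell}_{r,\infty}}+\kappa\|\widetilde{\theta}\|_{\dot{B}^{\ell+\alpha}_{r,\infty}}\le 0.
\]
Finally, since $e^{-K\mathcal{Y}(t)}$ is a positive function of $t$ alone, $\|\widetilde{\theta}\|_{\dot{B}^{\sigma}_{r,\infty}}=e^{-K\mathcal{Y}(t)}\|\theta\|_{\dot{B}^{\sigma}_{r,\infty}}$ for every $\sigma$; inserting $\sigma=\ell$ and $\sigma=\ell+\alpha$ turns the last inequality into \eqref{eq4.7} and completes the proof. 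The one point requiring a little care is the differentiation of the supremum hidden in the $l^{\infty}$ norm, which is justified as usual either for the upper Dini derivative or by integrating each frequency inequality before passing to the supremum.
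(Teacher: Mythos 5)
Your argument is correct in substance, but it takes a genuinely different route from the paper's own proof. You transplant the commutator machinery of Proposition \ref{pro4.1} and Lemma \ref{le4.2} to the level $\ell$ in $L^{r}$: you keep the damped unknown $\widetilde{\theta}=e^{-K\mathcal{Y}(t)}\theta$, invoke the divergence-free cancellation to kill the transport term, and are left with a commutator estimate for $[\mathbf{u},\Delta_{j}]\cdot\nabla\widetilde{\theta}$ in $L^{r}$ at regularity $\ell$. The paper instead applies $\Delta_{j}\Lambda^{\ell}$ to the \emph{undamped} equation \eqref{eq3.1}, makes no use of the transport cancellation at all, and estimates the full nonlinearity as a product, $\|\mathbf{u}\cdot\nabla\theta\|_{\dot{B}^{\ell}_{r,\infty}}\lesssim\|\theta\|_{\dot{B}^{1+\frac{2}{p}}_{p,1}}\|\theta\|_{\dot{B}^{\ell}_{r,\infty}}$, via Bony's decomposition, arriving at \eqref{eq4.10} and then at \eqref{eq4.7} upon multiplying by $e^{-K\mathcal{Y}(t)}$ with $K\geq C$. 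The reason the paper can afford to drop the commutator here (whereas it cannot in Proposition \ref{pro4.1}) is that the strong norm $\mathcal{Y}'(t)=\|\theta\|_{\dot{B}^{1+\frac{2}{p}}_{p,1}}$ carries $\alpha$ more derivatives than the space in which the solution is being measured; so in the dangerous paraproduct $T_{u^{i}}\partial_{i}\theta$ the derivative sitting on the high-frequency factor can be absorbed into $\mathcal{Y}'(t)$, while the weak norm $\dot{B}^{\ell}_{r,\infty}$ goes on the \emph{low}-frequency factor through the Bernstein embedding $L^{r}\to L^{\frac{pr}{p-r}}$ (this is exactly where $\ell<\frac{2}{p}$ and $r\le p$ enter in the paper). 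Your route keeps the weak norm on the high-frequency factor and uses the cancellation to recover the derivative; it requires re-proving Lemma \ref{le4.2} at the new indices, and your accounting of where the series converge (remainder needing $\ell>-1-\frac{2}{p}$, paraproduct/low-frequency terms needing $\ell<\frac{2}{p}$) matches the hypotheses, so both approaches close under the same assumptions.

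One technical inaccuracy you should fix: your claimed commutator bound cannot hold with an $l^{1}$-normalized sequence $d_{j}$, $\sum_{j}d_{j}=1$, as stated. Since $\widetilde{\theta}$ is only measured in the $l^{\infty}$-type norm $\dot{B}^{\ell}_{r,\infty}$, a term such as $[T_{u^{i}},\Delta_{j}]\partial_{i}\widetilde{\theta}$ only produces a \emph{bounded} sequence in $j$ (essentially $\sup_{|j'-j|\le4}2^{\ell j'}\|\Delta_{j'}\widetilde{\theta}\|_{L^{r}}/\|\widetilde{\theta}\|_{\dot{B}^{\ell}_{r,\infty}}$), not a summable one; in Lemma \ref{le4.2} the summability of $d_{j}$ comes precisely from the $l^{1}$ norm $\dot{B}^{1+\frac{2}{p}-\alpha}_{p,1}$, which you no longer have at your disposal. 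This does not damage your proof, because after multiplying by $2^{\ell j}$ and taking $l^{\infty}$ you only use $d_{j}\le1$; but the intermediate lemma should be stated with a uniform constant (or a bounded sequence) in place of $d_{j}$.
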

\begin{proof}
Applying the operator $\Delta_{j}\Lambda^{\ell}$ to the equation
\eqref{eq3.1},
then  taking $L^{2}$ inner product with $|\Delta_{j}\Lambda^{\ell}\theta|^{r-2}\Delta_{j}\Lambda^{\ell}\theta$ to the resultant,  we obtain  that
\begin{align*}
  \frac{1}{r}\frac{d}{dt}\|\Delta_{j}\Lambda^{\ell}\theta\|_{L^{r}}^{r}
  &+\big(\Lambda^{\alpha}\Delta_{j}\Lambda^{\ell}\theta\big{|}|\Delta_{j}\Lambda^{\ell}\theta|^{r-2}\Delta_{j}\Lambda^{\ell}\theta\big)=
  -\big(\Delta_{j}\Lambda^{\ell}(\mathbf{u}\cdot\nabla \theta)\big{|}|\Delta_{j}\Lambda^{\ell}\theta|^{r-2}\Delta_{j}\Lambda^{\ell}\theta\big)\nonumber\\
  &\leq \|\Delta_{j}\Lambda^{\ell}(\mathbf{u}\cdot\nabla \theta)\|_{L^{r}}
  \|\Delta_{j}\Lambda^{\ell}\theta\|_{L^{r}}^{r-1}.
\end{align*}
Thanks again to \cite{ CMZ07, W05}, there exists a positive constant $\kappa$
so that
\begin{equation*}
  \int_{\mathbb{R}^{2}}\Lambda^{\alpha}\Delta_{j}\Lambda^{\ell}\theta|\Delta_{j}\Lambda^{\ell}\theta|^{r-2}\Delta_{j}\Lambda^{\ell}\theta dx\geq
  \kappa2^{\alpha j}\|\Delta_{j}\Lambda^{\ell}\theta\|_{L^{r}}^{r}.
\end{equation*}
It follows that
\begin{align}\label{eq4.8}
  \frac{d}{dt}\|\Delta_{j}\Lambda^{\ell}\theta\|_{L^{r}}+\kappa2^{\alpha j}\|\Delta_{j}\Lambda^{\ell}\theta\|_{L^{r}}
   \lesssim \|\Delta_{j}\Lambda^{\ell}(\mathbf{u}\cdot\nabla \theta)\|_{L^{r}}.
\end{align}
Taking $l^{\infty}$ norm to \eqref{eq4.8}  and using Lemma \ref{le2.4},  we see that
\begin{align}\label{eq4.9}
  \frac{d}{dt}\|\theta\|_{\dot{B}^{\ell}_{r,\infty}}+\kappa\|\theta\|_{\dot{B}^{\ell+\alpha}_{r,\infty}}
   &\lesssim \|\mathbf{u}\cdot\nabla \theta\|_{\dot{B}^{\ell}_{r,\infty}}.
\end{align}
Thanks to  the Bony's paradifferential calculus and $\nabla\cdot\mathbf{u}=0$, we decompose
\begin{equation*}
   \mathbf{u}\cdot\nabla \theta=T_{u^{i}}\partial_{i}\theta+T_{\partial_{i}\theta}u^{i}+\partial_{i}R(u^{i}, \theta).
\end{equation*}
Applying  Lemmas \ref{le2.3} and
\ref{le2.4},  the above three terms can be estimated as follows:
\begin{align*}
  \|\Delta_{j}T_{u^{i}}\partial_{i}\theta\|_{L^{r}}
  &\lesssim  \sum_{|j'-j|\le4}\|S_{j'-1}u^{i}\|_{L^{\frac{pr}{p-r}}}\|\Delta_{j'}\partial_{i}\theta\|_{L^{p}}\nonumber\\
  &\lesssim \sum_{|j'-j|\le4}\sum_{k\le
   j'-2}2^{2(\frac{1}{r}-\frac{p-r}{pr})k}\|\Delta_{k}\theta\|_{L^{r}}2^{j'}\|\Delta_{j'}\theta\|_{L^{p}}\nonumber\\
   &\lesssim \sum_{|j'-j|\le4}\sum_{k\le j'-2}2^{(-\ell+\frac{2}{p}) k}2^{\ell k}\|\Delta_{k}\theta\|_{L^{r}}2^{j'}\|\Delta_{j'}\theta\|_{L^{p}}
  \nonumber\\
   &\lesssim 2^{-\ell j}\|\theta\|_{\dot{B}^{1+\frac{2}{p}}_{p,1}}
  \|\theta\|_{\dot{B}^{\ell}_{r,\infty}};
\end{align*}
\begin{align*}
  \|\Delta_{j}T_{\partial_{i}\theta}u^{i}\|_{L^{r}}&\lesssim  \sum_{|j'-j|\le4}\|S_{j'-1}\partial_{i}\theta\|_{L^{\infty}}\|\Delta_{j'}u^{i}\|_{L^{r}}\nonumber\\
   &\lesssim \sum_{|j'-j|\le4}\sum_{k\le j'-2}2^{(1+\frac{2}{p})k}\|\Delta_{k}\theta\|_{L^{p}}\|\Delta_{j'}\theta\|_{L^{r}}
  \nonumber\\
   &\lesssim 2^{-\ell j}\|\theta\|_{\dot{B}^{1+\frac{2}{p}}_{p,1}}
  \|\theta\|_{\dot{B}^{\ell}_{r,\infty}};
\end{align*}
\begin{align*}
  \|\Delta_{j}\partial_{i}R(u^{i},\theta)\|_{L^{r}}
  &\lesssim 2^{(1+\frac{2}{p})j}\sum_{j'\ge j-N_{0}}\|\Delta_{j'}u^{i}\|_{L^{p}}\|\widetilde{\Delta}_{j'}\theta\|_{L^{r}}\nonumber\\
  &\lesssim 2^{(1+\frac{2}{p})j}\sum_{j'\ge j-N_{0}}2^{-(1+\frac{2}{p}+\ell)j'}2^{(1+\frac{2}{p})j'}\|\Delta_{j'}\theta\|_{L^{p}}
  2^{\ell j'}\|\widetilde{\Delta}_{j'}\theta\|_{L^{r}}\nonumber\\
  &\lesssim 2^{-\ell j}\|\theta\|_{\dot{B}^{1+\frac{2}{p}}_{p,1}}
  \|\theta\|_{\dot{B}^{\ell}_{r,\infty}}.
\end{align*}
It is clear that the above estimates are verified due to $-1-\frac{2}{p}<\ell<\frac{2}{p}$, thus we obtain
\begin{align*}
   \|\mathbf{u}\cdot\nabla \theta\|_{\dot{B}^{\ell}_{r,\infty}}\lesssim \|\theta\|_{\dot{B}^{1+\frac{2}{p}}_{p,1}}
  \|\theta\|_{\dot{B}^{\ell}_{r,\infty}}.
\end{align*}
Therefore, we conclude by \eqref{eq4.9} that
\begin{align}\label{eq4.10}
  \frac{d}{dt}\|\theta\|_{\dot{B}^{\ell}_{r,\infty}}+\kappa\|\theta\|_{\dot{B}^{\ell+\alpha}_{r,\infty}}
   \leq C\|\theta\|_{\dot{B}^{1+\frac{2}{p}}_{p,1}}\|\theta\|_{\dot{B}^{\ell}_{r,\infty}}.
\end{align}
This yields \eqref{eq4.7} readily.  We complete the proof of Proposition \ref{pro4.3}.
\end{proof}

\subsection{Proof of Theorem \ref{th3.1}}

Now we present the proof of Theorem \ref{th3.1}.
For any real number $s$ such that $-\frac{2}{p}<s<1+\frac{2}{p}$,
we infer from Proposition \ref{pro4.3} that for all $t\geq0$,
\begin{align}\label{eq4.11}
  \|\theta(t)\|_{\dot{B}^{-s}_{r,\infty}}
  \leq C\|\theta_{0}\|_{\dot{B}^{-s}_{r,\infty}}\leq C_{0}.
\end{align}
Since $2\le r\leq p$,  the imbedding result in Lemma \ref{le2.4} yields that
$$
     \dot{B}^{-s}_{r,\infty}(\mathbb{R}^{2})\hookrightarrow\dot{B}^{-s-2(\frac{1}{r}-\frac{1}{p})}_{p,\infty}(\mathbb{R}^{2}),
$$
which together with \eqref{eq4.11} leads to for all $t\ge0$,
\begin{align}\label{eq4.12}
  \|\theta(t)\|_{\dot{B}^{-s-2(\frac{1}{r}-\frac{1}{p})}_{p,\infty}}
  \leq C_{0}.
\end{align}
 On the other hand, since $s>-\frac{2}{p}\ge\alpha-1-\frac{2}{r}$, the interpolation inequalities in Lemma \ref{le2.4} tell us  that
\begin{align*}
    \|\theta(t)\|_{\dot{B}^{1+\frac{2}{p}-\alpha}_{p,1}}&\leq C \|\theta(t)\|_{\dot{B}^{-s-2(\frac{1}{r}-\frac{1}{p})}_{p,\infty}}^{\frac{\alpha}{s+\frac{2}{r}+1}}
   \|\theta(t)\|_{\dot{B}^{1+\frac{2}{p}}_{p,1}}^{1-\frac{\alpha}{s+\frac{2}{r}+1}}.
\end{align*}
This together with \eqref{eq4.12}  implies that
\begin{align}\label{eq4.13}
   \|\theta(t)\|_{\dot{B}^{1+\frac{2}{p}}_{p,1}}&\geq C\|\theta(t)\|_{\dot{B}^{-s-2(\frac{1}{r}-\frac{1}{p})}_{p,\infty}}^{-\frac{\alpha}{s+\frac{2}{r}+1-\alpha}}
    \|\theta(t)\|_{\dot{B}^{1+\frac{2}{p}-\alpha}_{p,1}}^{1+\frac{\alpha}{s+\frac{2}{r}+1-\alpha}}
   \geq C\|\theta(t)\|_{\dot{B}^{1+\frac{2}{p}-\alpha}_{p,1}}^{1+\frac{\alpha}{s+\frac{2}{r}+1-\alpha}}.
\end{align}
Plugging \eqref{eq4.13}  into  \eqref{eq4.1}, by using the function $\mathcal{Y}(t)$ is positive along time evolution, we obtain
\begin{equation}\label{eq4.14}
   \frac{d}{dt}(e^{-K\mathcal{Y}(t)}\mathcal{E}(t))+C (e^{-K\mathcal{Y}(t)}\mathcal{E}(t))^{1+\frac{\alpha}{s+\frac{2}{r}+1-\alpha}}\leq 0.
\end{equation}
Solving this differential inequality directly, we have
\begin{equation*}
   \mathcal{E}(t)\leq e^{K\mathcal{Y}(t)}\left(\mathcal{E}(0)^{-\frac{\alpha}{s+\frac{2}{r}+1-\alpha}}+\frac{\alpha C t}{s+\frac{2}{r}+1-\alpha}\right)^{-\frac{s+\frac{2}{r}+1-\alpha}{\alpha}}.
\end{equation*}
 Since $\mathcal{Y}(t)$ is bounded by the initial data in Proposition \ref{pro4.1},  there exists a constant $C_{0}$ such that for all $t\geq0$,
\begin{equation}\label{eq4.15}
   \|\theta(t)\|_{\dot{B}^{1+\frac{2}{p}-\alpha}_{p,1}}\leq C_{0}\left(1+t\right)^{-\frac{s+\frac{2}{r}+1-\alpha}{\alpha}}.
\end{equation}
Notice that \eqref{eq4.15} gives in particular \eqref{eq3.3}  with $\ell=1+\frac{2}{p}-\alpha$.
Finally,  for any  $\ell\in[-s-2(\frac{1}{r}-\frac{1}{p}), 1+\frac{2}{p}-\alpha)$, by using interpolation inequalities in Lemma \ref{le2.4}, we see that
\begin{equation*}
     \|\theta(t)\|_{\dot{B}^{\ell}_{r,1}}\leq C\|\theta(t)\|_{\dot{B}^{-s-2(\frac{1}{r}-\frac{1}{p})}_{p,\infty}}^{\frac{1+\frac{2}{p}-\alpha-\ell}{s+\frac{2}{r}+1-\alpha}}
     \|\theta(t)\|_{\dot{B}^{1+\frac{2}{p}-\alpha}_{p,1}}^{\frac{\ell+s+2(\frac{1}{r}-\frac{1}{p})}{s+\frac{2}{r}+1-\alpha}},
\end{equation*}
which combining \eqref{eq4.12} and \eqref{eq4.15} implies that
\begin{equation*}
     \|\theta(t)\|_{\dot{B}^{\ell}_{r,1}}\leq C_{0}(1+t)^{-(\frac{\ell+s}{\alpha})-\frac{2}{\alpha}(\frac{1}{r}-\frac{1}{p})}.
\end{equation*}
We complete the proof of Theorem \ref{th3.1}, as desired.

\section{Proof of Theorem \ref{th3.2}}
Since the global well-posedness of the system \eqref{eq3.4} with small initial data $u_{0}\in\dot{B}^{-1+\frac{2}{p}}_{p,1}(\mathbb{R}^{2})$ has been proved by \cite{Z15}, it suffices to prove the temporal decay estimates \eqref{eq3.5} and \eqref{eq3.6}  in Theorem \ref{th3.2}.  Similarly, we first intend to establish the following two basic energy inequalities to the system  \eqref{eq3.4} in the framework of homogeneous Besov spaces.

\subsection{Basic energy inequalities}
Set
$$
  \mathcal{U}(t):= \|u(t)\|_{\dot{B}^{-1+\frac{2}{p}}_{p,1}}\ \ \ \text{and}\ \ \
   \mathcal{Z}(t):=\int_{0}^{t}\|u(\tau)\|_{\dot{B}^{\frac{2}{p}}_{p,1}}d\tau.
$$

\begin{proposition}\label{pro5.1}
Under the assumptions of Theorem \ref{th3.2},  let $u$ be the  unique global solution of  the system \eqref{eq3.4} corresponding to the initial data $u_{0}\in \dot{B}^{-1+\frac{2}{p}}_{p,1}(\mathbb{R}^{2})$.  Then there exist two constants $\kappa$ and $K$ such that
\begin{equation}\label{eq5.1}
   \frac{d}{dt}(e^{-K\mathcal{Z}(t)}\mathcal{U}(t))+\kappa e^{-K\mathcal{Z}(t)}\mathcal{Z}'(t)\leq 0.
\end{equation}
\end{proposition}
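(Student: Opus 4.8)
The plan is to reproduce the scheme of Proposition \ref{pro4.1}, the novelty being that the drift $\nabla\psi=\nabla\Lambda^{-2}u$ is no longer divergence free. I would first set $\widetilde{u}(t):=e^{-K\mathcal{Z}(t)}u(t)$; since $e^{-K\mathcal{Z}(t)}$ depends on $t$ alone, $\widetilde{u}$ satisfies
\begin{equation*}
  \partial_{t}\widetilde{u}+\Lambda\widetilde{u}+\nabla\cdot(\widetilde{u}\nabla\psi)=-K\mathcal{Z}'(t)\widetilde{u},\qquad \nabla\cdot\nabla\psi=\Delta\psi=-u.
\end{equation*}
Applying $\Delta_{j}$ and pairing with $|\Delta_{j}\widetilde{u}|^{p-2}\Delta_{j}\widetilde{u}$, Lemma \ref{le2.3} furnishes the dissipation bound $\kappa2^{j}\|\Delta_{j}\widetilde{u}\|_{L^{p}}^{p}$ and the last term gives $-K\mathcal{Z}'(t)\|\Delta_{j}\widetilde{u}\|_{L^{p}}^{p}$. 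Everything then rests on showing that the nonlinear pairing is bounded by $C\mathcal{Z}'(t)\big(\|\Delta_{j}\widetilde{u}\|_{L^{p}}+d_{j}2^{(1-\frac{2}{p})j}\|\widetilde{u}\|_{\dot{B}^{-1+\frac{2}{p}}_{p,1}}\big)\|\Delta_{j}\widetilde{u}\|_{L^{p}}^{p-1}$.

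I would resist the temptation to split $\nabla\cdot(\widetilde{u}\nabla\psi)=\nabla\psi\cdot\nabla\widetilde{u}-\widetilde{u}u$, because the high-high interaction $R(\widetilde{u},u)$ so produced fails to belong to $\dot{B}^{-1+\frac{2}{p}}_{p,1}$ once $p\ge2$. Instead I keep the divergence and use Bony's decomposition on the product, $\widetilde{u}\nabla\psi=T_{\widetilde{u}}\nabla\psi+T_{\nabla\psi}\widetilde{u}+R(\widetilde{u},\nabla\psi)$. On each dyadic block the operator $\nabla\Lambda^{-2}$ contributes a factor $2^{-j'}$ to $\Delta_{j'}\nabla\psi$, which is exactly the net derivative that cancels the outer $\nabla\cdot$ and makes the remainder sum converge (this is where $\frac{2}{p}>0$, i.e. $p<\infty$, enters). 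Consequently $\nabla\cdot T_{\widetilde{u}}\nabla\psi$ and $\nabla\cdot R(\widetilde{u},\nabla\psi)$ can be estimated statically in $\dot{B}^{-1+\frac{2}{p}}_{p,1}$ by $d_{j}2^{(1-\frac{2}{p})j}\mathcal{Z}'(t)\|\widetilde{u}\|_{\dot{B}^{-1+\frac{2}{p}}_{p,1}}$ through Lemmas \ref{le2.3} and \ref{le2.4}; and because $\nabla\cdot S_{j'-1}\nabla\psi=-S_{j'-1}u$, the middle term $\nabla\cdot T_{\nabla\psi}\widetilde{u}=T_{\nabla\psi}\cdot\nabla\widetilde{u}-T_{u}\widetilde{u}$ splits off the harmless paraproduct $T_{u}\widetilde{u}$, leaving only $T_{\nabla\psi}\cdot\nabla\widetilde{u}$.

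For this last, low-frequency-drift term the static estimate genuinely fails: $\nabla\psi$ acts only as an $L^{\infty}$ multiplier at the regularity of $\widetilde{u}$, so the outer gradient costs a full derivative that no product inequality returns. The missing derivative is recovered by integration by parts inside the inner product, whose leading contribution is
\begin{equation*}
  \big(\nabla\psi\cdot\nabla\Delta_{j}\widetilde{u}\,\big|\,|\Delta_{j}\widetilde{u}|^{p-2}\Delta_{j}\widetilde{u}\big)=\frac{1}{p}\int_{\mathbb{R}^{2}}\nabla\psi\cdot\nabla|\Delta_{j}\widetilde{u}|^{p}\,dx=\frac{1}{p}\int_{\mathbb{R}^{2}}u\,|\Delta_{j}\widetilde{u}|^{p}\,dx\le\|u\|_{L^{\infty}}\|\Delta_{j}\widetilde{u}\|_{L^{p}}^{p},
\end{equation*}
which is $\lesssim\mathcal{Z}'(t)\|\Delta_{j}\widetilde{u}\|_{L^{p}}^{p}$ since $\dot{B}^{\frac{2}{p}}_{p,1}\hookrightarrow L^{\infty}$ by Lemma \ref{le2.4}. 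The accompanying commutator $[\Delta_{j},S_{j'-1}\nabla\psi\cdot\nabla]\Delta_{j'}\widetilde{u}$ is treated exactly as in Lemma \ref{le4.2} via the first order Taylor formula, losing the extra derivative against $\|\nabla^{2}\psi\|_{L^{\infty}}\lesssim\|u\|_{\dot{B}^{\frac{2}{p}}_{p,1}}=\mathcal{Z}'(t)$, and is again of admissible size.

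Summing these bounds, dividing by $\|\Delta_{j}\widetilde{u}\|_{L^{p}}^{p-1}$, multiplying by $2^{(-1+\frac{2}{p})j}$ and adding the $l^{1}$ series gives
\begin{equation*}
  \frac{d}{dt}\|\widetilde{u}\|_{\dot{B}^{-1+\frac{2}{p}}_{p,1}}+\kappa\|\widetilde{u}\|_{\dot{B}^{\frac{2}{p}}_{p,1}}\le(C-K)\,\mathcal{Z}'(t)\,\|\widetilde{u}\|_{\dot{B}^{-1+\frac{2}{p}}_{p,1}},
\end{equation*}
so that choosing $K>C$ annihilates the right-hand side; recalling $\|\widetilde{u}\|_{\dot{B}^{-1+\frac{2}{p}}_{p,1}}=e^{-K\mathcal{Z}(t)}\mathcal{U}(t)$ and $\|\widetilde{u}\|_{\dot{B}^{\frac{2}{p}}_{p,1}}=e^{-K\mathcal{Z}(t)}\mathcal{Z}'(t)$ yields \eqref{eq5.1}. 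The principal obstacle is precisely the low-high term $\nabla\cdot T_{\nabla\psi}\widetilde{u}$: since $\nabla\psi$ is an order $-1$ (Riesz-type) image of $u$, only the structural identity $\Delta\psi=-u$, used through integration by parts together with $\dot{B}^{\frac{2}{p}}_{p,1}\hookrightarrow L^{\infty}$, recovers the borderline derivative. The constraints $\alpha=1$ (the gained derivative matches the dissipation order) and $p\in[2,\infty)$ (positivity in Lemma \ref{le2.3} and the convergence $\frac{2}{p}>0$ of the high-high sums) are exactly what let the estimate close.
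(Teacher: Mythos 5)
Your argument does close, and its skeleton (the weight $e^{-K\mathcal{Z}(t)}$, the blockwise $L^{p}$ energy pairing, the Bernstein lower bound $\kappa 2^{j}\|\Delta_{j}\widetilde{u}\|_{L^{p}}^{p}$, Bony decomposition in divergence form, then $K>C$) is the same as the paper's; but you diverge on one term, and the reason you give for diverging is mistaken. The paper's Lemma \ref{le5.2} estimates \emph{all three} Bony pieces of $u\nabla(-\Delta)^{-1}\widetilde{u}$ statically, with no commutator and no integration by parts anywhere in Section 5: for the low-frequency-drift paraproduct the smoothing of order $-1$ in $\nabla(-\Delta)^{-1}$ gives
\begin{equation*}
  \|S_{j'-1}\partial_{i}(-\Delta)^{-1}\widetilde{u}\|_{L^{\infty}}\lesssim\sum_{k\le j'-2}2^{(-1+\frac{2}{p})k}\|\Delta_{k}\widetilde{u}\|_{L^{p}}\le\|\widetilde{u}\|_{\dot{B}^{-1+\frac{2}{p}}_{p,1}},
\end{equation*}
so the drift factor carries the evolved norm and the high-frequency density factor carries $\|u\|_{\dot{B}^{\frac{2}{p}}_{p,1}}=\mathcal{Z}'(t)$. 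Your claim that for $T_{\nabla\psi}\widetilde{u}$ ``the static estimate genuinely fails'' overlooks that $e^{-K\mathcal{Z}(t)}$ is a scalar: the static bound you would get for that term, namely $\|u\|_{\dot{B}^{-1+\frac{2}{p}}_{p,1}}\|\widetilde{u}\|_{\dot{B}^{\frac{2}{p}}_{p,1}}$, is \emph{identically equal} to $\mathcal{Z}'(t)\|\widetilde{u}\|_{\dot{B}^{-1+\frac{2}{p}}_{p,1}}$, which is exactly the admissible right-hand side; which factor wears the tilde is immaterial. Your substitute treatment --- peeling off $T_{u}\widetilde{u}$, integrating the leading transport term by parts via $-\Delta\psi=u$ together with $\dot{B}^{\frac{2}{p}}_{p,1}\hookrightarrow L^{\infty}$, and handling the commutator $[\Delta_{j},S_{j'-1}\nabla\psi\cdot\nabla]\Delta_{j'}\widetilde{u}$ by Taylor's formula as in Lemma \ref{le4.2} --- is valid, and it is instructive in that it transplants the quasi-geostrophic machinery of Proposition \ref{pro4.1} to a drift that is not divergence free; but it is extra structure the Keller--Segel nonlinearity does not require, precisely because here the drift $\nabla(-\Delta)^{-1}u$ is one full derivative smoother than $u$, whereas in the QG case the Riesz-transform velocity has the same regularity as $\theta$ and the commutator/cancellation argument is genuinely unavoidable. (Your side remark about avoiding the splitting $\nabla\psi\cdot\nabla\widetilde{u}-\widetilde{u}u$ is sound in spirit, though the remainder $R(\widetilde{u},u)$ actually becomes problematic for large $p$, roughly $p\ge4$, not for all $p\ge2$.)
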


We first prove the following lemma. In order to do so, set
\begin{equation*}
  \widetilde{u}(t):=e^{-K\mathcal{Z}(t)}u(t),
\end{equation*}
where $K$ is a constant to be determined later.
\begin{lemma}\label{le5.2}
Let $2\leq p<\infty$.  Then
\begin{align}\label{eq5.2}
   \|\Delta_{j}(u\nabla(-\Delta)^{-1}\widetilde{u})\|_{L^{p}}\lesssim  2^{-\frac{2j}{p}}d_{j}\mathcal{Z}'(t)\|\widetilde{u}\|_{\dot{B}^{-1+\frac{2}{p}}_{p,1}}.
\end{align}
\end{lemma}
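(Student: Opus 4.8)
The plan is to estimate the nonlinear term $\Delta_j(u\nabla(-\Delta)^{-1}\widetilde u)$ by splitting the product via Bony's paradifferential decomposition. Writing $v := \nabla(-\Delta)^{-1}\widetilde u$, the key observation is that since $-\Delta\psi = u$ gives $\psi = (-\Delta)^{-1}u$, the field $v$ is essentially $\widetilde u$ filtered by a homogeneous Fourier multiplier of order $-1$. By Lemma \ref{le2.4}(iii), $\Lambda^{-1}$ is an isomorphism from $\dot B^{s}_{p,r}$ to $\dot B^{s+1}_{p,r}$, so $v$ gains one derivative over $\widetilde u$; concretely $\|\Delta_j v\|_{L^q}\approx 2^{-j}\|\Delta_j\widetilde u\|_{L^q}$ for any $q$. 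The decomposition I would use is
\begin{equation*}
  u\, v = T_{v^i} u^{(i)} + T_{u} v + R(u, v),
\end{equation*}
or more precisely treating the product component-wise; each of the three pieces (paraproduct with low-frequency $u$, paraproduct with low-frequency $v$, and the remainder) must be shown to contribute a term bounded by $2^{-2j/p}d_j\|u\|_{\dot B^{2/p}_{p,1}}\|\widetilde u\|_{\dot B^{-1+2/p}_{p,1}}$, after which I identify $\|u\|_{\dot B^{2/p}_{p,1}} = \mathcal Z'(t)$.

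First I would handle the paraproduct $T_{v}u = \sum_{|j'-j|\le 4}S_{j'-1}v\,\Delta_{j'}u$. Using Bernstein (Lemma \ref{le2.3}) to control $\|S_{j'-1}v\|_{L^\infty}$ by summing $\|\Delta_k v\|_{L^\infty}\lesssim 2^{2k/p}\|\Delta_k v\|_{L^p}\approx 2^{(2/p-1)k}\|\Delta_k u\|_{L^p}$ over $k\le j'-2$, the low-frequency sum converges precisely because the exponent $2/p-1<0$ (here $p\ge 2$). This produces $\|S_{j'-1}v\|_{L^\infty}\lesssim d_{j'}\|u\|_{\dot B^{-1+2/p}_{p,1}}$-type control, paired with $\|\Delta_{j'}u\|_{L^p}\lesssim 2^{-2j'/p}d_{j'}\|u\|_{\dot B^{2/p}_{p,1}}$. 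Next the symmetric paraproduct $T_u v$ is estimated the same way with the roles of $u$ and $v$ interchanged, placing the high-frequency factor in $L^p$ and the low-frequency factor in $L^\infty$; again the gain of one derivative in $v$ makes the bookkeeping of the Besov indices work out to the same target power $2^{-2j/p}$. For the remainder $R(u,v)=\sum_{j'\ge j-N_0}\Delta_{j'}u\,\widetilde\Delta_{j'}v$, I would use the standard high-high estimate: because $\Delta_j$ of the remainder is supported near frequency $2^j$, Bernstein gives a factor $2^{2j/p}$ (from an $L^{p/2}\to L^p$ or Hölder step), and the summability over $j'\ge j-N_0$ holds since the combined Besov regularity $(-1+2/p)+(2/p)$ exceeds the relevant threshold.

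The main obstacle will be the remainder term $R(u,v)$, as is typical: one must verify that the sum over $j'\ge j-N_0$ of the high-high interactions converges and reassembles into a single $\ell^1$ sequence $(d_j)$. The convergence is governed by the sign of the net exponent appearing after applying Bernstein to gain the missing $2^{2j/p}$ factor, and this is exactly where the constraint $p\ge 2$ (equivalently $2/p\le 1$) is needed so that the geometric series in $j'-j$ converges. I would organize this by writing $\Delta_{j'}u = 2^{-(-1+2/p)j'}c_{j'}\|u\|_{\dot B^{-1+2/p}_{p,1}}$ and $\widetilde\Delta_{j'}v\approx 2^{-j'}\cdot 2^{-(2/p)j'}\tilde c_{j'}\|u\|_{\dot B^{2/p}_{p,1}}$ with $(c_{j'}),(\tilde c_{j'})\in\ell^1$, then checking that the resulting exponent of $2^{j'}$ is negative so the tail sum is dominated by a convergent series producing the claimed $\ell^1$ bound. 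Once all three pieces carry the common factor $2^{-2j/p}d_j\|u\|_{\dot B^{2/p}_{p,1}}\|\widetilde u\|_{\dot B^{-1+2/p}_{p,1}}$, substituting $\mathcal Z'(t)=\|u\|_{\dot B^{2/p}_{p,1}}$ yields \eqref{eq5.2}.
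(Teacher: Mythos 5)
Your skeleton is exactly the paper's proof: the same Bony splitting $u\,v=T_vu+T_uv+R(u,v)$ with $v=\nabla(-\Delta)^{-1}\widetilde u$, Bernstein's inequality (Lemma \ref{le2.3}) to put low-frequency factors in $L^\infty$, Hölder plus Bernstein for the remainder, and the identification $\mathcal{Z}'(t)=\|u\|_{\dot B^{2/p}_{p,1}}$; the three bounds you claim are the correct ones. However, two of your justifications are wrong as stated and would fail if you wrote them out.

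First, for the paraproduct with low-frequency $v$ you assert that $\sum_{k\le j'-2}2^{(2/p-1)k}\|\Delta_k u\|_{L^p}$ ``converges precisely because the exponent $2/p-1<0$.'' This is backwards: the sum runs over $k\to-\infty$, and for a negative exponent $2^{(2/p-1)k}\to\infty$ in that direction, so a geometric-series argument would need a \emph{positive} exponent and in fact gives nothing here (with only $\dot B^{-1+2/p}_{p,\infty}$ control of $u$ the estimate produces $\sum_{k\le j'-2}1=\infty$). What makes the sum finite is the $\ell^1$ summability built into the norm: $2^{(2/p-1)k}\|\Delta_k u\|_{L^p}$ is exactly the summand of $\|u\|_{\dot B^{-1+2/p}_{p,1}}$, so the sum is $\le\|u\|_{\dot B^{-1+2/p}_{p,1}}$ with no sign condition whatsoever; this is precisely how the paper argues, and it is also why the third index $1$ (rather than $\infty$) matters in this lemma. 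Incidentally, the low-frequency factor carries no $d_{j'}$; the $\ell^1$ sequence comes only from the high-frequency factor, which is harmless in your write-up but should not be stated as ``$\|S_{j'-1}v\|_{L^\infty}\lesssim d_{j'}\|u\|_{\dot B^{-1+2/p}_{p,1}}$.''

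Second, in the remainder you locate the constraint $p\ge2$ in the convergence of the geometric series in $j'-j$. After your own bookkeeping the $j'$-dependent weight is $2^{-\frac{4j'}{p}}$, so $\sum_{j'\ge j-N_0}2^{-\frac{4}{p}(j'-j)}$ converges for \emph{every} $p<\infty$; no restriction arises there. Where $p\ge2$ genuinely enters this lemma is the step you mention only in passing: the bound $\|\Delta_j(\Delta_{j'}u\,\widetilde\Delta_{j'}v)\|_{L^p}\lesssim 2^{\frac{2j}{p}}\|\Delta_{j'}u\,\widetilde\Delta_{j'}v\|_{L^{p/2}}\le 2^{\frac{2j}{p}}\|\Delta_{j'}u\|_{L^p}\|\widetilde\Delta_{j'}v\|_{L^p}$ uses Lemma \ref{le2.3} with $a=p/2$, $b=p$, which requires $p/2\ge1$. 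Finally, note that your remainder bookkeeping measures $u$ in $\dot B^{-1+2/p}_{p,1}$ and the factor built from $\widetilde u$ in $\dot B^{2/p}_{p,1}$, the opposite of the paper's assignment and of what the target $\mathcal{Z}'(t)\|\widetilde u\|_{\dot B^{-1+2/p}_{p,1}}$ literally requires; this is rescued only because $\widetilde u=e^{-K\mathcal{Z}(t)}u$ is a scalar multiple of $u$, so $\|u\|_{\dot B^{-1+2/p}_{p,1}}\|\widetilde u\|_{\dot B^{2/p}_{p,1}}=\|u\|_{\dot B^{2/p}_{p,1}}\|\widetilde u\|_{\dot B^{-1+2/p}_{p,1}}$, and that identity deserves to be stated explicitly rather than left implicit.
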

\begin{proof}
Thanks to the Bony's paraproduct decomposition, we have
\begin{equation*}
    u\nabla(-\Delta)^{-1}\widetilde{u}:=T_{u}\partial_{i}(-\Delta)^{-1}\widetilde{u}+T_{\partial_{i}(-\Delta)^{-1}\widetilde{u}}u+R(u, \partial_{i}(-\Delta)^{-1}\widetilde{u}).
\end{equation*}
Thus, by using  Lemmas \ref{le2.3} and \ref{le2.4}, we obtain that
\begin{align*}
   \|\Delta_{j}T_{u}\partial_{i}(-\Delta)^{-1}\widetilde{u}\|_{L^{p}} &
   \lesssim \sum_{|j-j'|\leq 4}\|S_{j'-1}u\|_{L^{\infty}}\|\Delta_{j'}\partial_{i}(-\Delta)^{-1}\widetilde{u}\|_{L^{p}}\nonumber\\
    &\lesssim \sum_{|j-j'|\leq 4}\sum_{k\leq j'-2}2^{\frac{2k}{p}}\|\Delta_{k}u\|_{L^{p}}2^{-j'}\|\Delta_{j'}\widetilde{u}\|_{L^{p}}\nonumber\\
  &\lesssim \sum_{|j-j'|\leq 4}2^{-j'}\|\Delta_{j'}\widetilde{u}\|_{L^{p}}\|u\|_{\dot{B}^{\frac{2}{p}}_{p,1}}\nonumber\\
  &\lesssim 2^{-\frac{2j}{p}}d_{j}\|u\|_{\dot{B}^{\frac{2}{p}}_{p,1}}\|\widetilde{u}\|_{\dot{B}^{-1+\frac{2}{p}}_{p,1}}\nonumber\\
  &\lesssim  2^{-\frac{2j}{p}}d_{j}\mathcal{Z}'(t)\|\widetilde{u}\|_{\dot{B}^{-1+\frac{2}{p}}_{p,1}};
\end{align*}
\begin{align*}
    \|\Delta_{j}T_{\partial_{i}(-\Delta)^{-1}\widetilde{u}}u\|_{L^{p}} &
    \lesssim \sum_{|j-j'|\leq 4}\|S_{j'-1}\partial_{i}(-\Delta)^{-1}\widetilde{u}\|_{L^{\infty}}\|\Delta_{j'}u\|_{L^{p}}\nonumber\\
     & \lesssim \sum_{|j-j'|\leq 4}\sum_{k\leq j'-2}2^{(-1+\frac{2}{p})k}\|\Delta_{k}\widetilde{u}\|_{L^{p}}\|\Delta_{j'}u\|_{L^{p}}\nonumber\\
  &\lesssim 2^{-\frac{2j}{p}}\sum_{|j-j'|\leq 4}2^{\frac{2j'}{p}}\|\Delta_{j'}u\|_{L^{p}}\|\widetilde{u}\|_{\dot{B}^{-1+\frac{2}{p}}_{p,1}}\nonumber\\
  &\lesssim  2^{-\frac{2j}{p}}d_{j}\|u\|_{\dot{B}^{\frac{2}{p}}_{p,1}}\|\widetilde{u}\|_{\dot{B}^{-1+\frac{2}{p}}_{p,1}}\nonumber\\
  &\lesssim 2^{-\frac{2j}{p}}d_{j}\mathcal{Z}'(t)\|\widetilde{u}\|_{\dot{B}^{-1+\frac{2}{p}}_{p,1}};
\end{align*}
\begin{align*}
    \|\Delta_{j}R(u, \partial_{i}(-\Delta)^{-1}\widetilde{u})\|_{L^{p}} &
    \lesssim 2^{\frac{2j}{p}} \sum_{j'\ge j-N_{0}}\|\Delta_{j'}u\|_{L^{p}}\|\widetilde{\Delta}_{j'}\partial_{i}(-\Delta)^{-1}\widetilde{u}\|_{L^{p}}\nonumber\\
     & \lesssim  2^{\frac{2j}{p}} \sum_{j'\ge j-N_{0}}2^{-\frac{4j'}{p}}2^{\frac{2j'}{p}}\|\Delta_{j'}u\|_{L^{p}}2^{(-1+\frac{2}{p})j'}\|\widetilde{\Delta}_{j'}\widetilde{u}\|_{L^{p}}\nonumber\\
  &\lesssim  2^{-\frac{2j}{p}}d_{j}\|u\|_{\dot{B}^{\frac{2}{p}}_{p,1}}\|\widetilde{u}\|_{\dot{B}^{-1+\frac{2}{p}}_{p,1}}\nonumber\\
  &\lesssim 2^{-\frac{2j}{p}}d_{j}\mathcal{Z}'(t)\|\widetilde{u}\|_{\dot{B}^{-1+\frac{2}{p}}_{p,1}}.
\end{align*}
We finish the proof of Lemma \ref{le5.2}.
\end{proof}

\medskip

\textbf{Proof  of Proposition \ref{pro5.1}}
We first observe that $\widetilde{u}$ satisfies the following equation:
\begin{equation}\label{eq5.3}
  \partial_{t} \widetilde{u}+\Lambda\widetilde{u}+\nabla\cdot(u\nabla\widetilde{\psi})=-K\mathcal{Z}'(t)\widetilde{u},
\end{equation}
where $\widetilde{\psi}(t):= e^{-K\mathcal{Z}(t)}\psi(t)=(-\Delta)^{-1}\widetilde{u}(t)$.
Then applying the homogeneous localization operator $\Delta_{j}$ to the equation \eqref{eq5.3} and taking $L^{2}$ inner product with $|\Delta_{j}\widetilde{u}|^{p-2}\Delta_{j}\widetilde{u}$, we see that
\begin{align}\label{eq5.4}
  \frac{1}{p}\frac{d}{dt}\|\Delta_{j}\widetilde{u}\|_{L^{p}}^{p}
  &+\big(\Lambda\Delta_{j}\widetilde{u}\big||\Delta_{j}\widetilde{u}|^{p-2}\Delta_{j}\widetilde{u}\big)=
  -\big(\Delta_{j}\nabla\cdot(u\nabla\widetilde{\psi})\big{|}|\Delta_{j}\widetilde{u}|^{p-2}\Delta_{j}\widetilde{u}\big)
  -K\mathcal{Z}'(t)\|\Delta_{j}\widetilde{u}\|_{L^{p}}^{p}\nonumber\\
  &\leq \|\Delta_{j}\nabla\cdot(u\nabla\widetilde{\psi})\|_{L^{p}}\|\Delta_{j}\widetilde{u}\|_{L^{p}}^{p-1}-K\mathcal{Z}'(t)\|\Delta_{j}\widetilde{u}\|_{L^{p}}^{p}.
\end{align}
Thanks to \cite{ CMZ07, W05}, there exists a positive constant $\kappa$
so that
\begin{equation*}
  \int_{\mathbb{R}^{2}}\Lambda\Delta_{j}\widetilde{u}|\Delta_{j}\widetilde{u}|^{p-2}\Delta_{j}\widetilde{u}dx\geq
  \kappa2^{j}\|\Delta_{j}\widetilde{u}\|_{L^{p}}^{p}.
\end{equation*}
It follows that
\begin{align*}
  \frac{d}{dt}\|\Delta_{j}\widetilde{u}\|_{L^{p}}+\kappa2^{j}\|\Delta_{j}\widetilde{u}\|_{L^{p}}
   &\lesssim
 \|\Delta_{j}\nabla\cdot(u\nabla\widetilde{\psi})\|_{L^{p}}-K\mathcal{Z}'(t)\|\Delta_{j}\widetilde{u}\|_{L^{p}}\\
  &\lesssim
  2^{j}\|\Delta_{j}(u\nabla(-\Delta)^{-1}\widetilde{u})\|_{L^{p}}-K\mathcal{Z}'(t)\|\Delta_{j}\widetilde{u}\|_{L^{p}}.
\end{align*}
Lemma \ref{le5.2}  gives us to
\begin{align*}
  \frac{d}{dt}\|\Delta_{j}\widetilde{u}\|_{L^{p}}+\kappa2^{j}\|\Delta_{j}\widetilde{u}\|_{L^{p}}
   &\lesssim 2^{(1-\frac{2}{p})j}d_{j}\mathcal{Z}'(t)\|\widetilde{u}\|_{\dot{B}^{-1+\frac{2}{p}}_{p,1}}-K\mathcal{Z}'(t)\|\Delta_{j}\widetilde{u}\|_{L^{p}},
\end{align*}
which implies directly that
\begin{align}\label{eq5.5}
  \frac{d}{dt}\|\widetilde{u}(t)\|_{\dot{B}^{-1+\frac{2}{p}}_{p,1}}+\kappa\|\widetilde{u}(t)\|_{\dot{B}^{\frac{2}{p}}_{p,1}}
   &\leq C\mathcal{Z}'(t)\|\widetilde{u}(t)\|_{\dot{B}^{-1+\frac{2}{p}}_{p,1}}-K\mathcal{Z}'(t)\|\widetilde{u}(t)\|_{\dot{B}^{-1+\frac{2}{p}}_{p,1}}.
\end{align}
By choosing $K$ sufficiently large such that $K>C$, we see that
\begin{align*}
   \frac{d}{dt}\|\widetilde{u}(t)\|_{\dot{B}^{-1+\frac{2}{p}}_{p,1}}+
  \kappa\|\widetilde{u}(t)\|_{\dot{B}^{\frac{2}{p}}_{p,1}}
   &\leq 0.
\end{align*}
The proof of Proposition \ref{pro5.1} is complete.

\medskip

Let $\ell$ be a real number and $2\le r\le p$.
Define
$$
\mathcal{V}(t):= \|u(t)\|_{\dot{B}^{\ell}_{r,\infty}}.
$$
\begin{proposition}\label{pro5.3}
Under the assumptions of Proposition \ref{pro5.1}, if we further assume that
$u_{0}\in \dot{B}^{\ell}_{r,\infty}(\mathbb{R}^{2})$ with $2\le r\leq p$, and
$$
   -1-\frac{2}{p}<\ell<-1+\frac{2}{p},
$$
then there exist two positive constants $\kappa$ and $K$ such that for all $t\geq0$,  we have
\begin{align}\label{eq5.6}
  \frac{d}{dt}(e^{-K\mathcal{Z}(t)}\mathcal{V}(t))+ \kappa e^{-K\mathcal{Z}(t)}\|u(t)\|_{\dot{B}^{\ell+1}_{r,\infty}}\leq0.
\end{align}
\end{proposition}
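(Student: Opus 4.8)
The plan is to follow the scheme of Lemma \ref{le5.2} and Proposition \ref{pro4.3} almost verbatim, adapting only the high/low regularity bookkeeping so that the \emph{negative} norm $\dot B^{\ell}_{r,\infty}$ is the quantity being propagated. First I would apply $\Delta_{j}\Lambda^{\ell}$ to the first equation in \eqref{eq3.4} and pair the result in $L^{2}$ with $|\Delta_{j}\Lambda^{\ell}u|^{r-2}\Delta_{j}\Lambda^{\ell}u$. Invoking the positivity estimate for $\Lambda^{\alpha}$ with $\alpha=1$ from \cite{CMZ07,W05}, exactly as in \eqref{eq4.8}, the dissipation is bounded below by $\kappa 2^{j}\|\Delta_{j}\Lambda^{\ell}u\|_{L^{r}}^{r}$; dividing by $\|\Delta_{j}\Lambda^{\ell}u\|_{L^{r}}^{r-1}$ and taking the $l^{\infty}$ norm yields
\[
\frac{d}{dt}\|u\|_{\dot B^{\ell}_{r,\infty}}+\kappa\|u\|_{\dot B^{\ell+1}_{r,\infty}}\lesssim \|\nabla\cdot(u\nabla(-\Delta)^{-1}u)\|_{\dot B^{\ell}_{r,\infty}}.
\]
Everything then reduces to the bilinear estimate $\|\nabla\cdot(u\nabla(-\Delta)^{-1}u)\|_{\dot B^{\ell}_{r,\infty}}\lesssim \|u\|_{\dot B^{2/p}_{p,1}}\|u\|_{\dot B^{\ell}_{r,\infty}}$; once this is in hand the right-hand side is $C\mathcal Z'(t)\mathcal V(t)$, and the passage to \eqref{eq5.6} is identical to how \eqref{eq4.10} gives \eqref{eq4.7}.

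To prove the bilinear estimate I would set $w:=\partial_{i}(-\Delta)^{-1}u$ (summation in $i$), observe via the $\Lambda$-calculus of Lemma \ref{le2.4} that $\|\Delta_{k}w\|_{L^{s}}\approx 2^{-k}\|\Delta_{k}u\|_{L^{s}}$ for every $s$, and split $u\,w=T_{u}w+T_{w}u+R(u,w)$ by Bony's decomposition, applying $\partial_{i}$ to each piece. The guiding principle, as in \eqref{eq4.9}, is to place one factor in the smooth norm $\dot B^{2/p}_{p,1}=\mathcal Z'$ and the other in the rough norm $\dot B^{\ell}_{r,\infty}=\mathcal V$, using the Hölder split $\tfrac1r=\tfrac1p+\tfrac{p-r}{pr}$ and the Bernstein inequalities of Lemma \ref{le2.3}. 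For $\partial_{i}T_{u}w$ I would keep the low-frequency factor $S_{j'-1}u$ in $L^{\infty}$ (controlled by $\|u\|_{\dot B^{2/p}_{p,1}}$ through $\dot B^{2/p}_{p,1}\hookrightarrow L^{\infty}$) and $\Delta_{j'}w$ in $L^{r}$; this term imposes no restriction on $\ell$. For $\partial_{i}T_{w}u$ the low-frequency coefficient is $S_{j'-1}w$, estimated in $L^{pr/(p-r)}$ by $\sum_{k\le j'-2}2^{2k/p}\|\Delta_{k}w\|_{L^{r}}\approx\sum_{k\le j'-2}2^{(2/p-1-\ell)k}\|u\|_{\dot B^{\ell}_{r,\infty}}$, whose geometric sum converges exactly when $\ell<-1+\tfrac2p$; this is the upper endpoint of the hypothesis, and it is lower than the $\ell<\tfrac2p$ of the quasi-geostrophic case precisely because $w$ carries one more derivative than $u$.

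The delicate term, and the one fixing the lower endpoint, is the remainder $\partial_{i}R(u,w)$. Here both factors sit at frequency $2^{j'}$ with $j'\ge j-N_{0}$, and the hard part will be to obtain the whole range down to $\ell>-1-\tfrac2p$ rather than a spurious $\ell>-1$. A naive use of Bernstein to equalize the integrabilities on each dyadic block costs an extra factor $2^{2j'/p}$ at the \emph{high} frequencies $j'$ and degrades the admissible range. The correct route, mirroring the remainder line of Proposition \ref{pro4.3}, is to apply Hölder to each product $\Delta_{j'}u\,\widetilde\Delta_{j'}w$ in the lower Lebesgue exponent $L^{q}$ with $\tfrac1q=\tfrac1p+\tfrac1r$, and to recover $L^{r}$ by a \emph{single} Bernstein inequality on the output block $\Delta_{j}$ only. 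This produces the prefactor $2^{(1+2/p)j}$ together with the summable weight $2^{-(1+2/p+\ell)(j'-j)}$, so that the sum over $j'\ge j-N_{0}$ converges iff $1+\tfrac2p+\ell>0$, i.e. $\ell>-1-\tfrac2p$. Resisting the temptation to symmetrize the integrabilities too early, and instead deferring the $L^{q}\hookrightarrow L^{r}$ loss to the final frequency block, is exactly what the factor $2^{(1+2/p)j}$ encodes and what makes the endpoint sharp.

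Collecting the three contributions, each is dominated by $2^{-\ell j}d_{j}\|u\|_{\dot B^{2/p}_{p,1}}\|u\|_{\dot B^{\ell}_{r,\infty}}$, so taking $\sup_{j}2^{\ell j}(\cdot)$ gives the bilinear estimate and hence $\frac{d}{dt}\mathcal V(t)+\kappa\|u\|_{\dot B^{\ell+1}_{r,\infty}}\le C\mathcal Z'(t)\mathcal V(t)$. Finally, writing $\frac{d}{dt}(e^{-K\mathcal Z(t)}\mathcal V(t))=e^{-K\mathcal Z(t)}(\mathcal V'(t)-K\mathcal Z'(t)\mathcal V(t))$ and choosing $K\ge C$ absorbs the nonlinear contribution, leaving $\kappa e^{-K\mathcal Z(t)}\|u(t)\|_{\dot B^{\ell+1}_{r,\infty}}$ on the left, which is precisely \eqref{eq5.6}.
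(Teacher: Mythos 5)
Your proposal is correct and follows essentially the same argument as the paper's proof: the same frequency localization with the generalized Bernstein lower bound for $\Lambda$, the same Bony decomposition of $u\nabla(-\Delta)^{-1}u$ with identical H\"older/Bernstein bookkeeping (the paraproduct with $\partial_{i}(-\Delta)^{-1}u$ at low frequency forcing $\ell<-1+\frac{2}{p}$, the remainder with the single Bernstein step $2^{2j/p}$ on the output block forcing $\ell>-1-\frac{2}{p}$), and the same absorption of $C\mathcal{Z}'(t)\mathcal{V}(t)$ by taking $K\geq C$. The only cosmetic difference is that you keep the outer divergence $\partial_{i}$ on each Bony piece, whereas the paper first rewrites $\|\nabla\cdot(u\nabla(-\Delta)^{-1}u)\|_{\dot{B}^{\ell}_{r,\infty}}\approx\|u\nabla(-\Delta)^{-1}u\|_{\dot{B}^{\ell+1}_{r,\infty}}$ and estimates the product without the derivative.
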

\begin{proof}
Applying the homogeneous localization operator $\Delta_{j}\Lambda^{\ell}$ to the first equation of
\eqref{eq3.4},
then  taking $L^{2}$ inner product with $|\Delta_{j}\Lambda^{\ell}u|^{r-2}\Delta_{j}\Lambda^{\ell}u$ to the resultant,  we obtain  that
\begin{align*}
  \frac{1}{r}\frac{d}{dt}\|\Delta_{j}\Lambda^{\ell}u\|_{L^{r}}^{r}
  &+\big(\Lambda\Delta_{j}\Lambda^{\ell}u\big{|}|\Delta_{j}\Lambda^{\ell}u|^{r-2}\Delta_{j}\Lambda^{\ell}u\big)=
  -\big(\Delta_{j}\Lambda^{\ell}\nabla\cdot(u\nabla \psi)\big{|}|\Delta_{j}\Lambda^{\ell}u|^{r-2}\Delta_{j}\Lambda^{\ell}u\big)\nonumber\\
  &\leq \|\Delta_{j}\Lambda^{\ell}\nabla\cdot(u\nabla \psi)\|_{L^{r}}
  \|\Delta_{j}\Lambda^{\ell}u\|_{L^{r}}^{r-1},
\end{align*}
Thanks again to \cite{ CMZ07, W05}, there exists a positive constant $\kappa$
so that
\begin{equation*}
  \int_{\mathbb{R}^{2}}\Lambda\Delta_{j}\Lambda^{\ell}u|\Delta_{j}\Lambda^{\ell}u|^{r-2}\Delta_{j}\Lambda^{\ell}u dx\geq
  \kappa2^{ j}\|\Delta_{j}\Lambda^{\ell}u\|_{L^{r}}^{r}.
\end{equation*}
It follows that
\begin{align}\label{eq5.7}
  \frac{d}{dt}\|\Delta_{j}\Lambda^{\ell}u\|_{L^{r}}+\kappa2^{ j}\|\Delta_{j}\Lambda^{\ell}u\|_{L^{r}}
   \lesssim \|\Delta_{j}\Lambda^{\ell}\nabla\cdot(u\nabla \psi)\|_{L^{r}}\lesssim \|\Delta_{j}\Lambda^{\ell}\nabla\cdot(u\nabla (-\Delta)^{-1}u)\|_{L^{r}}.
\end{align}
Taking $l^{\infty}$ norm to \eqref{eq5.7}  and using Lemma \ref{le2.4},  we see that
\begin{align}\label{eq5.8}
  \frac{d}{dt}\|u\|_{\dot{B}^{\ell}_{r,\infty}}+\kappa\|u\|_{\dot{B}^{\ell+1}_{r,\infty}}
   \lesssim \|\nabla\cdot(u\nabla (-\Delta)^{-1}u)\|_{\dot{B}^{\ell}_{r,\infty}}\approx \|u\nabla (-\Delta)^{-1}u\|_{\dot{B}^{\ell+1}_{r,\infty}}.
\end{align}
In order to calculate the right-hand side term of \eqref{eq5.8}, we resort  the Bony's  paraproduct decomposition to deduce that
\begin{equation*}
    u\nabla(-\Delta)^{-1}u:=T_{u}\partial_{i}(-\Delta)^{-1}u+T_{\partial_{i}(-\Delta)^{-1}u}u+R(u, \partial_{i}(-\Delta)^{-1}u).
\end{equation*}
Applying  Lemmas \ref{le2.3} and
\ref{le2.4}, we can estimate the above three terms as follows:
\begin{align*}
  \|\Delta_{j}T_{u}\partial_{i}(-\Delta)^{-1}u\|_{L^{r}}
  &\lesssim  \sum_{|j'-j|\le4}\|S_{j'-1}u\|_{L^{\infty}}\|\Delta_{j'}\partial_{i}(-\Delta)^{-1}u\|_{L^{r}}\nonumber\\
   &\lesssim \sum_{|j'-j|\le4}\sum_{k\le j'-2}2^{\frac{2k}{p}}\|\Delta_{k}u\|_{L^{p}}2^{-j'}\|\Delta_{j'}u\|_{L^{r}}
  \nonumber\\
   &\lesssim 2^{-(\ell+1) j}\|u\|_{\dot{B}^{\frac{2}{p}}_{p,1}}
  \|u\|_{\dot{B}^{\ell}_{r,\infty}};
\end{align*}
\begin{align*}
  \|\Delta_{j}T_{\partial_{i}(-\Delta)^{-1}u}u\|_{L^{r}}&\lesssim
   \sum_{|j'-j|\le4}\|S_{j'-1}\partial_{i}(-\Delta)^{-1}u\|_{L^{\frac{pr}{p-r}}}\|\Delta_{j'}u\|_{L^{p}}\nonumber\\
  &\lesssim  \sum_{|j'-j|\le4}\sum_{k\le j'-2}2^{-k+2(\frac{1}{r}-\frac{p-r}{pr})k}\|\Delta_{k}u\|_{L^{r}}\|\Delta_{j'}u\|_{L^{p}}\nonumber\\
   &\lesssim \sum_{|j'-j|\le4}\sum_{k\le j'-2}2^{-(1+\ell-\frac{2}{p})k}2^{\ell k}\|\Delta_{k}u\|_{L^{r}}\|\Delta_{j'}u\|_{L^{p}}
  \nonumber\\
   &\lesssim 2^{-(1+\ell)j}\|u\|_{\dot{B}^{\frac{2}{p}}_{p,1}}
  \|u\|_{\dot{B}^{\ell}_{r,\infty}};
\end{align*}
\begin{align*}
  \|\Delta_{j}R(u,\partial_{i}(-\Delta)^{-1}u)\|_{L^{r}}
  &\lesssim 2^{\frac{2j}{p}}\sum_{j'\ge j-N_{0}}\|\Delta_{j'}u\|_{L^{p}}\|\widetilde{\Delta}_{j'}\partial_{i}(-\Delta)^{-1}u\|_{L^{r}}\nonumber\\
  &\lesssim 2^{\frac{2j}{p}}\sum_{j'\ge j-N_{0}}2^{-(1+\frac{2}{p}+\ell)j'}2^{\frac{2j'}{p}}\|\Delta_{j'}u\|_{L^{p}}
  2^{\ell j'}\|\widetilde{\Delta}_{j'}u\|_{L^{r}}\nonumber\\
  &\lesssim 2^{-(\ell+1) j}\|u\|_{\dot{B}^{\frac{2}{p}}_{p,1}}
  \|u\|_{\dot{B}^{\ell}_{r,\infty}}.
\end{align*}
The above estimates are verified due to $-1-\frac{2}{p}<\ell<-1+\frac{2}{p}$, which yields that
\begin{align*}
   \|u\nabla(-\Delta)^{-1}u\|_{\dot{B}^{\ell+1}_{r,\infty}}\lesssim \|u\|_{\dot{B}^{\frac{2}{p}}_{p,1}}
  \|u\|_{\dot{B}^{\ell}_{r,\infty}}.
\end{align*}
Therefore, returning back to \eqref{eq5.8}, we conclude that
\begin{align}\label{eq5.9}
  \frac{d}{dt}\|u\|_{\dot{B}^{\ell}_{r,\infty}}+\kappa\|u\|_{\dot{B}^{\ell+1}_{r,\infty}}
   \leq C\|u\|_{\dot{B}^{\frac{2}{p}}_{p,1}}\|u\|_{\dot{B}^{\ell}_{r,\infty}}.
\end{align}
 The proof of Proposition \ref{pro5.3} is accomplished.
\end{proof}

\subsection{Proof of Theorem \ref{th3.2}}

Now we present the proof of Theorem \ref{th3.2}.
By Proposition \ref{pro5.3}, for any $s>0$ such that $1-\frac{2}{p}<s<1+\frac{2}{p}$,
we see that for all $t\geq0$,
\begin{align}\label{eq5.10}
  \|u(t)\|_{\dot{B}^{-s}_{r,\infty}}
  \leq C\|u_{0}\|_{\dot{B}^{-s}_{r,\infty}}\leq C_{0}.
\end{align}
Since $2\le r\le  p$,  we infer from the imbedding in Lemma \ref{le2.4} that
$$
     \dot{B}^{-s}_{r,\infty}(\mathbb{R}^{2})\hookrightarrow\dot{B}^{-s-2(\frac{1}{r}-\frac{1}{p})}_{p,\infty}(\mathbb{R}^{2}),
$$
which together with \eqref{eq5.10} leads to for all $t\ge0$,
\begin{align}\label{eq5.11}
  \|u(t)\|_{\dot{B}^{-s-2(\frac{1}{r}-\frac{1}{p})}_{p,\infty}}
  \leq C_{0}.
\end{align}
 On the other hand, since $s>1-\frac{2}{p}\ge 1-\frac{2}{r}$,  by interpolation inequalities in Lemma \ref{le2.4}, we have
\begin{align*}
    \|u(t)\|_{\dot{B}^{-1+\frac{2}{p}}_{p,1}}&\leq C \|u(t)\|_{\dot{B}^{-s-2(\frac{1}{r}-\frac{1}{p})}_{p,\infty}}^{\frac{1}{s+\frac{2}{r}}}
   \|u(t)\|_{\dot{B}^{\frac{2}{p}}_{p,1}}^{1-\frac{1}{s+\frac{2}{r}}}.
\end{align*}
This together with \eqref{eq5.11}  implies that
\begin{align}\label{eq5.12}
   \|u(t)\|_{\dot{B}^{\frac{2}{p}}_{p,1}}&\geq C\|u(t)\|_{\dot{B}^{-s-2(\frac{1}{r}-\frac{1}{p})}_{p,\infty}}^{-\frac{1}{s+\frac{2}{r}-1}}
    \|u(t)\|_{\dot{B}^{-1+\frac{2}{p}}_{p,1}}^{1+\frac{1}{s+\frac{2}{r}-1}}
   \geq C\|u(t)\|_{\dot{B}^{-1+\frac{2}{p}}_{p,1}}^{1+\frac{1}{s+\frac{2}{r}-1}}=C\mathcal{U}(t)^{1+\frac{1}{s+\frac{2}{r}-1}}.
\end{align}
Plugging \eqref{eq5.12}  into  \eqref{eq5.1}, by using the function $\mathcal{Z}(t)$ is positive along time evolution, we obtain
\begin{equation}\label{eq5.13}
   \frac{d}{dt}(e^{-K\mathcal{Z}(t)}\mathcal{U}(t))+C (e^{-K\mathcal{Z}(t)}\mathcal{U}(t))^{1+\frac{1}{s+\frac{2}{r}-1}}\leq 0.
\end{equation}
Solving this differential inequality directly, we obtain
\begin{equation*}
   \mathcal{U}(t)\leq e^{K\mathcal{Z}(t)}\left(\mathcal{U}(0)^{-\frac{1}{s+\frac{2}{r}-1}}+\frac{C t}{s+\frac{2}{r}-1}\right)^{-(s+\frac{2}{r}-1)}.
\end{equation*}
 Since $\mathcal{Z}(t)$ is bounded by the initial data in Proposition \ref{pro5.1},  there exists a constant $C_{0}$ such that for all $t\geq0$,
\begin{equation}\label{eq5.14}
   \|u(t)\|_{\dot{B}^{-1+\frac{2}{p}}_{p,1}}\leq C_{0}\left(1+t\right)^{-(s+\frac{2}{r}-1)}.
\end{equation}
Notice that \eqref{eq5.14} gives in particular \eqref{eq3.6}  with $\ell=-1+\frac{2}{p}$.
Finally,  for any  $\ell\in[-s-2(\frac{1}{r}-\frac{1}{p}), -1+\frac{2}{p})$, by using interpolation inequalities in Lemma \ref{le2.4}, we see that
\begin{equation*}
     \|u(t)\|_{\dot{B}^{\ell}_{r,1}}\leq C\|u(t)\|_{\dot{B}^{-s-2(\frac{1}{r}-\frac{1}{p})}_{p,\infty}}^{\frac{\frac{2}{p}-\ell-1}{s+\frac{2}{r}-1}}
     \|u(t)\|_{\dot{B}^{-1+\frac{2}{p}}_{p,1}}^{\frac{\ell+s+2(\frac{1}{r}-\frac{1}{p})}{s+\frac{2}{r}-1}},
\end{equation*}
which combining \eqref{eq5.11} and \eqref{eq5.14} implies that
\begin{equation*}
     \|u(t)\|_{\dot{B}^{\ell}_{r,1}}\leq C_{0}(1+t)^{-(\ell+s)-2(\frac{1}{r}-\frac{1}{p})}.
\end{equation*}
We complete the proof of Theorem \ref{th3.2}, as desired.

\medskip
\medskip

\noindent\textbf{Acknowledgments.} The author is deeply grateful to Professor Qiao Liu for his warm communication and valuable suggestions. This paper is partially supported by the National Natural Science Foundation of China (11371294), the Fundamental Research Funds for the Central Universities (2014YB031) and the Fundamental Research Project of Natural Science in Shaanxi Province--Young Talent Project (2015JQ1004).

\end{document}